\newtheorem{thm}{Theorem}[section]
\newtheorem{cor}[thm]{Corollary}
\newtheorem{lemma}[thm]{Lemma}
\newtheorem{rem}[thm]{Remark}
\begin{document}

\begin{frontmatter}

\title{An improvement and generalization of  Rotfel'd type inequalities for sectorial matrices}

%% Group authors per affiliation:
\author{Fanghong Nan\fnref{1footnote}}\author{Teng Zhang\fnref{2footnote}}
\address{School of Mathematics and Statistics, Xi'an Jiaotong University, Xi'an 710049, China}
\fntext[1footnote]{Email: nfh1108@stu.xjtu.edu.cn}
\fntext[2footnote]{Email: teng.zhang@stu.xjtu.edu.cn}

\begin{abstract} By using equivalence conditions for sectorial matrices obtained by Alakhrass and Sababheh in 2020, we improve a Rotfel'd type inequality for sectorial matrices derived by  P. Zhang in 2015 and generalize a result derived by Y. Mao et al. in 2024. 
\end{abstract}

\begin{keyword} Polar decomposition, Rotfel'd theorem, sectorial matrix.
  \MSC[2010] 15A45, 15A60
\end{keyword}

\end{frontmatter}

%\linenumbers

\section{Introduction}
 Let $\mathbb{M}_{n}$ be the set of all $n \times n$ complex matrices. The set of $n \times n$ positive semidefinite matrices is denoted by $\mathbb{M}_{n}^{+}$. A norm $\|\cdot\|$ on $\mathbb{M}_{n}$ is unitarily invariant if $\Vert{UAV}\Vert = \Vert{A}\Vert$ for any $A \in \mathbb{M}_{n}$ and unitary matrices $U,V \in \mathbb{M}_{n}$. For $A \in \mathbb{M}_{n}$, we denote by $A^{*}$ and $|A| = (A^{*}A)^{\frac{1}{2}}$ the conjugate transpose and the modulus of $A$, respectively. We denote the $j$-th largest singular value of $A$ by $\sigma_{j}(A)$. If $A$ is Hermitian, then its eigenvalues of $A$ are real and its $j$-th largest eigenvalue is written as $\lambda_{j}(A)$. Note that $\sigma_{j}(A) = \lambda_{j}(|A|)$, $j = 1,\cdots,n$. For two Hermitian matrices $A,B \in \mathbb{M}_{n}$, we write $A \leq B$ to mean $B-A \in \mathbb{M}_{n}^{+}$.

For $A \in \mathbb{M}_{n}$, we can write
\begin{equation*}
A = \Re A+i\Im A,
\end{equation*}
where 
\begin{equation*}
\Re A = \frac{1}{2}(A+A^{*}),\quad \Im A = \frac{1}{2i}(A-A^{*}).\nonumber
\end{equation*}
This is called the Cartesian decomposition of $A$. 

The numerical range of $A \in \mathbb{M}_{n}$ is defined by
\begin{equation*}
W(A) = \{x^{*}Ax| x \in \mathbb{C}^{n},x^{*}x=1\}.
\end{equation*}

Sectorial matrices were defined in \cite{AP03} as an extension of the definition of the positive matrices. For $\alpha \in [0,\pi/2)$, we define a sector on the complex plane
\begin{equation*}
S_{\alpha} = \{z \in \mathbb{C}^{n}| \Re z \geq 0,|\Im z| \leq (\Re z)\tan\alpha \}.
\end{equation*}

A matrix $A \in \mathbb{M}_{n}$ whose numerical range is a subset of a sector $S_{\alpha}$, for some $\alpha \in [0,\pi/2)$, is called a sectorial matrix. Relevant studies of sectorial matrices can be found in Drury and Lin \cite{DL14}, Zhang \cite{ZF15} and references therein. Note that if $\alpha = 0$, then a sectorial matrix $A$ is a positive matrix. It is obvious, from the definition of sectorial matrices that if $W(A)\subseteq S_{\alpha}$ for some $\alpha$, then $\Re A \in \mathbb{M}_{n}^{+}$. 

Consider a partitioned matrix $A \in \mathbb{M}_{n}$ in the form
\begin{equation}\label{e1}
A = \begin{bmatrix}
	A_{11}&A_{12}\\
	A_{21}&A_{22}
\end{bmatrix}, \quad \text{where diagonal blocks } A_{11}\text{ and } A_{22}  \text{ are square}.
\end{equation}

Let $A,B$ be positive semidefinite matrices and let $f(x)$ be a non-negative concave function on $[0,\infty)$. In  \cite{RS69}, Rotfel'd proved a subadditivity result for concave functions of sums of singular values of operators. In \cite{LE11}, Lee proved the following result which is considered as an extension of the classic Rotfel'd theorem.
\begin{thm}\label{lt}
Let $A \in \mathbb{M}_{n}$ be positive semidefinite and be partitioned as in (\ref{e1}), and let $f : [0,\infty) \to [0,\infty)$ be a concave function. Then for any unitarily invariant norm $\Vert{\cdot}\Vert$ it holds
\begin{equation}
\Vert{f(A)}\Vert \leq \Vert{f(A_{11})}\Vert+\Vert{f(A_{22})}\Vert.\nonumber
\end{equation}
\end{thm}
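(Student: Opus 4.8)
The engine I would use is the classical \emph{additive} Rotfel'd inequality of \cite{RS69}: for positive semidefinite $X,Y\in\mathbb{M}_{n}$ and a non-negative concave $f$, one has $\Vert f(X+Y)\Vert\le\Vert f(X)\Vert+\Vert f(Y)\Vert$. The plan is to exhibit $A$ as a sum $X+Y$ of two positive semidefinite matrices whose active parts are exactly $A_{11}$ and $A_{22}$, and then feed this into the additive inequality. First I record two elementary facts about $f$ that will be used: since $f\ge 0$ is concave on $[0,\infty)$ it is automatically non-decreasing (a concave function with negative slope somewhere would eventually become negative), so $\sigma_{j}(f(A))=f(\lambda_{j}(A))$ in decreasing order; and $f(0)\ge 0$ with concavity gives subadditivity, $f(a+b)\le f(a)+f(b)$.

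The key structural step is the decomposition
\begin{equation*}
A=U\begin{bmatrix}A_{11}&0\\0&0\end{bmatrix}U^{*}+V\begin{bmatrix}0&0\\0&A_{22}\end{bmatrix}V^{*}
\end{equation*}
for suitable unitaries $U,V\in\mathbb{M}_{n}$. I would approach this by factoring $A=C^{*}C$ and splitting the columns of $C$ as $C=[\,C_{1}\ \ C_{2}\,]$, so that $A_{11}=C_{1}^{*}C_{1}$ and $A_{22}=C_{2}^{*}C_{2}$; the two summands are positive semidefinite and unitarily congruent to $A_{11}$ and $A_{22}$ padded by zeros. Granting this, set $X=U(A_{11}\oplus 0)U^{*}$ and $Y=V(0\oplus A_{22})V^{*}$. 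Assuming for the moment that $f(0)=0$, the additive inequality applied to $A=X+Y$ gives $\Vert f(A)\Vert\le\Vert f(X)\Vert+\Vert f(Y)\Vert$, and by unitary invariance together with $f(0)=0$,
\begin{equation*}
\Vert f(X)\Vert=\bigl\Vert U\,(f(A_{11})\oplus 0)\,U^{*}\bigr\Vert=\Vert f(A_{11})\oplus 0\Vert=\Vert f(A_{11})\Vert,
\end{equation*}
and likewise $\Vert f(Y)\Vert=\Vert f(A_{22})\Vert$, which is exactly the claim.

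For the general case $f(0)=c\ge 0$ the decomposition alone is too lossy (padding produces extra eigenvalues equal to $f(0)$), so I would write $f=g+c$ with $g:=f-c$ concave, non-negative and $g(0)=0$, so the previous paragraph applies to $g$. Passing to Ky Fan $k$-norms, which by Fan dominance govern all unitarily invariant norm inequalities, the constant contributes $(\min(k,n_{1})+\min(k,n_{2}))\,c\ge kc$ to the right-hand side (here $n_{1},n_{2}$ are the block sizes) while contributing exactly $kc$ to the left-hand side; since $\min(k,n_{1})+\min(k,n_{2})\ge k$ always, the constant only helps, and the inequality for $f$ follows from that for $g$. The main obstacle is the decomposition step: realizing $A$ as a sum of two positive semidefinite matrices with the \emph{prescribed spectra} of $A_{11}$ and $A_{22}$ is a genuine matrix-theoretic fact of Thompson--Horn type, not a formal manipulation, and it is precisely this that prevents one from quoting the additive theorem as a black box; the bookkeeping needed when $f(0)>0$ is the secondary technical point.
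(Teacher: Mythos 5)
First, a point of reference: the paper never proves this statement at all --- it appears as Theorem~\ref{lt}, quoted as known background from Lee \cite{LE11} --- so there is no in-paper proof to compare against; the natural comparison is with Lee's original argument, and your sketch essentially reconstructs it. Your proof is correct. The decomposition $A=U(A_{11}\oplus 0)U^{*}+V(0\oplus A_{22})V^{*}$ is precisely the lemma at the heart of \cite{LE11}, and your route to it works: writing $A=C^{*}C$ with $C=[\,C_{1}\ \ C_{2}\,]$, the polar decomposition $C=W|C|$ gives $A=W^{*}(CC^{*})W=W^{*}C_{1}C_{1}^{*}W+W^{*}C_{2}C_{2}^{*}W$, and each $C_{i}C_{i}^{*}$ is Hermitian with the same spectrum as $C_{i}^{*}C_{i}=A_{ii}$ padded by zeros, hence unitarily similar to $A_{ii}\oplus 0$. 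So the step you flag as ``a genuine matrix-theoretic fact of Thompson--Horn type'' is really a short formal consequence of the polar decomposition, not an obstacle. Your treatment of $f(0)=c>0$ (replacing $f$ by $g=f-c$ and checking Ky Fan norms, using $\min(k,n_{1})+\min(k,n_{2})\geq k$) is also correct, and it is exactly the bookkeeping that the zero-padding convention for norms of the smaller blocks requires. The only imprecision is attributional: what \cite{RS69} actually proves is the trace-type statement $\sum_{j}f(\sigma_{j}(X+Y))\leq\sum_{j}f(\sigma_{j}(X))+\sum_{j}f(\sigma_{j}(Y))$, not the inequality for every unitarily invariant norm that your argument invokes. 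That stronger engine is, however, available from the paper's own toolkit: Lemma~\ref{3.3} (Aujla--Bourin \cite{AB07}) gives unitaries with $f(X+Y)\leq U^{*}f(X)U+V^{*}f(Y)V$, and combining this with Lemma~\ref{le2.6} (applied to the identity function) and the triangle inequality yields $\Vert f(X+Y)\Vert\leq\Vert f(X)\Vert+\Vert f(Y)\Vert$ for all non-negative concave $f$ and positive semidefinite $X,Y$. With that citation repaired, your sketch is a complete proof --- and it is, in substance, Lee's proof.
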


In \cite{ZP15}, Zhang considered sectorial matrices and obtained the following extension of Lee's result. 
\begin{thm}\cite[Theorem 3.4]{ZP15}\label{zpt}
Let $f : [0,\infty) \to [0,\infty)$ be a concave function and let $A$ with $W(A) \subseteq S_{\alpha}$ for $\alpha \in [0,\pi/2)$ be partitioned as in (1). Then
\begin{equation*}
\Vert{f(|A|)}\Vert \leq \Vert{f(|A_{11}|)}\Vert+\Vert{f(|A_{22}|)}\Vert+2(\Vert{f(\tan(\alpha)|A_{11}|)}\Vert+\Vert{f(\tan(\alpha)|A_{22}|)}\Vert).
\end{equation*} 
\end{thm}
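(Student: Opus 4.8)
The plan is to reduce everything to Lee's theorem (Theorem~\ref{lt}) applied to two positive semidefinite matrices, $\Re A$ and a matrix dominating $|\Im A|$, after first separating the contributions of the real and imaginary parts of $A$. Two elementary features of $f$ will be used repeatedly: a non-negative concave function on $[0,\infty)$ is automatically non-decreasing and subadditive ($f(s+t)\le f(s)+f(t)$). I also record three structural facts. First, each diagonal block $A_{ii}$ is again sectorial, since $W(A_{ii})\subseteq W(A)\subseteq S_{\alpha}$; in particular $\Re A_{ii}\ge 0$, so $f$ may be applied to it. Second, for any $M$ one has $\lambda_{j}(\Re M)\le\sigma_{j}(M)$, whence (using that $f$ is non-decreasing and that unitarily invariant norms are monotone on eigenvalues of positive matrices) $\|f(\Re A_{ii})\|\le\|f(|A_{ii}|)\|$. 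Third, the sector condition $W(A)\subseteq S_{\alpha}$ is equivalent to $\Re A\ge 0$ together with the two-sided bound $-\tan(\alpha)\Re A\le \Im A\le \tan(\alpha)\Re A$.

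The engine of the proof is a generalized Rotfel'd triangle inequality: for all $X,Y\in\mathbb{M}_{n}$,
\begin{equation*}
\|f(|X+Y|)\|\le \|f(|X|)\|+\|f(|Y|)\|.
\end{equation*}
This follows by combining Thompson's triangle inequality (there exist unitaries $U,V$ with $|X+Y|\le U|X|U^{*}+V|Y|V^{*}$) with the Bourin--Uchiyama subadditivity $\|f(P+Q)\|\le\|f(P)\|+\|f(Q)\|$ for positive semidefinite $P,Q$: since $f$ is non-decreasing, Weyl's monotonicity turns the operator bound on $|X+Y|$ into the corresponding norm bound, and then subadditivity splits the right-hand side. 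Applying this to the Cartesian decomposition $A=\Re A+i\,\Im A$ and using $|\Re A|=\Re A$ (as $\Re A\ge0$) and $|i\,\Im A|=|\Im A|$ gives $\|f(|A|)\|\le \|f(\Re A)\|+\|f(|\Im A|)\|$.

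It remains to estimate the two summands. For the first, Lee's theorem applied to the positive semidefinite $\Re A$ yields $\|f(\Re A)\|\le\|f(\Re A_{11})\|+\|f(\Re A_{22})\|$, which by the facts above is at most $\|f(|A_{11}|)\|+\|f(|A_{22}|)\|$. For the second, I would use the Jordan decomposition $\Im A=C_{+}-C_{-}$ into positive and negative parts. The two-sided sector bound with Weyl's monotonicity gives $\lambda_{j}(C_{\pm})\le \tan(\alpha)\,\lambda_{j}(\Re A)$ for every $j$, hence $\|f(C_{\pm})\|\le\|f(\tan(\alpha)\Re A)\|$; and since $|\Im A|=C_{+}+C_{-}$, Bourin--Uchiyama gives $\|f(|\Im A|)\|\le\|f(C_{+})\|+\|f(C_{-})\|\le 2\|f(\tan(\alpha)\Re A)\|$. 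A final application of Lee's theorem to $\tan(\alpha)\Re A$, followed once more by $\|f(\tan(\alpha)\Re A_{ii})\|\le\|f(\tan(\alpha)|A_{ii}|)\|$, turns this into $2(\|f(\tan(\alpha)|A_{11}|)\|+\|f(\tan(\alpha)|A_{22}|)\|)$. Adding the two estimates produces exactly the claimed bound.

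The step I expect to be the main obstacle — and the one that dictates the whole architecture — is that one cannot compare $|A|$ to $\Re A$ directly: neither the operator inequality $|A|\le\sec(\alpha)\Re A$ nor the eigenvalue inequalities $\sigma_{j}(A)\le\sec(\alpha)\lambda_{j}(\Re A)$ or $\sigma_{j}(\Im A)\le\tan(\alpha)\sigma_{j}(\Re A)$ hold in general (simple $2\times2$ examples break them), and because $f$ is concave, weak majorization of the arguments is not preserved after applying $f$. This forces the use of the triangle inequality plus subadditivity, rather than a pointwise comparison, to split off the imaginary part; and it is the indefiniteness of $\Im A$, handled through the positive and negative parts $C_{\pm}$, that is responsible for the factor $2$ in front of the $\tan(\alpha)$ terms.
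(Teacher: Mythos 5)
Your proof is correct, and at the decisive step it takes a genuinely different route from the paper, which obtains this statement only as a weakened consequence (at $s=1$) of its stronger Theorem~\ref{mt}. The shared skeleton is the same in both arguments: split $A=\Re A+i\,\Im A$ via Thompson's inequality (Lemma~\ref{3.2}), pass from operator bounds to norm bounds using monotonicity of $f$ (Lemma~\ref{le2.6}), use subadditivity of $\Vert f(\cdot)\Vert$ on positive semidefinite matrices (the paper's Lemma~\ref{3.3}; your Bourin--Uchiyama citation is an equivalent tool), reduce to the diagonal blocks by Lee's theorem (Theorem~\ref{lt}), and finish with Fan--Hoffman (Corollary~\ref{3.4}). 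The genuine difference is how $|\Im A|$ is controlled by $\Re A$: the paper feeds the Alakhrass--Sababheh block characterization (Lemma~\ref{3.1}) into its polar-decomposition Lemma~\ref{t2.1} to get the operator inequality $|\Im A|\le \frac{s\tan\alpha}{2}\Re A+\frac{\tan\alpha}{2s}U^{*}(\Re A)U$, which at $s=1$ yields the sharper constant $\frac{\tan\alpha}{2}$; you instead use only the definitional two-sided bound $-\tan(\alpha)\Re A\le \Im A\le \tan(\alpha)\Re A$, split $\Im A=C_{+}-C_{-}$ into Jordan parts, and apply Weyl monotonicity to get $\lambda_{j}(C_{\pm})\le\tan(\alpha)\lambda_{j}(\Re A)$, the factor $2$ arising from the two parts. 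Your eigenvalue-wise comparison is exactly the right substitute for an operator bound --- as you observe, $\sigma_{j}(\Im A)\le\tan(\alpha)\sigma_{j}(\Re A)$ fails in general, and so does the operator inequality $C_{\pm}\le\tan(\alpha)\Re A$ --- and your route is more elementary: it needs no block-matrix lemma and no sectorial characterization beyond the definition of the numerical range. What it cannot deliver is the paper's improvement: $\lambda_{j}(C_{\pm})\le\frac{\tan\alpha}{2}\lambda_{j}(\Re A)$ already fails for $1\times 1$ matrices, so the constant $\tan\alpha$ is intrinsic to the Jordan-decomposition approach, whereas the averaging built into Lemma~\ref{t2.1} is precisely what buys the paper its coefficient $\frac{\tan\alpha}{2}$ and the free parameter $s$.
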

Zhang also obtained the following result:
\begin{thm}\cite[Corollary 3.3]{ZP15}\label{zpc}
Let $f : [0,\infty) \to [0,\infty)$ be a concave function and let $A$ with $W(A) \subseteq S_{\pi/4}$ be partitioned as in $(1)$. Then
\begin{equation*}
\Vert{f(|A|)}\Vert \leq 2(\Vert{f(\tfrac{\sqrt{2}}{2}|A_{11}|)}\Vert+\Vert{f(\tfrac{\sqrt{2}}{2}|A_{22}|)}\Vert).
\end{equation*} 
\end{thm}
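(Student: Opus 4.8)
The plan is to pass from $|A|$ to the Hermitian part $\Re A$, which is positive semidefinite, invoke Lee's theorem (Theorem \ref{lt}) on $\Re A$, and then translate the diagonal blocks $\Re A_{ii}$ back to $|A_{ii}|$, absorbing the constants using the concavity of $f$. The geometry of the sector $S_{\pi/4}$ enters only through two scalar-to-matrix comparisons between the singular values of $A$ and the eigenvalues of $\Re A$.

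First I would record those two comparisons. On one hand, the Fan--Hoffman inequality gives $\lambda_j(\Re A) \le \sigma_j(A)$ for every $j$. On the other hand, the hypothesis $W(A) \subseteq S_{\pi/4}$ forces $\sigma_j(A) \le \sec(\pi/4)\,\lambda_j(\Re A) = \sqrt{2}\,\lambda_j(\Re A)$; the scalar prototype $z = re^{i\theta}$ with $|\theta| \le \pi/4$ already shows $|z| = r \le \sqrt{2}\,r\cos\theta = \sqrt{2}\,\Re z$, and the matrix statement is the corresponding sectorial refinement. I would also note that a nonnegative concave $f$ on $[0,\infty)$ is automatically nondecreasing (a negative slope would force $f \to -\infty$, contradicting $f \ge 0$), so $f$ respects these inequalities, and that unitarily invariant norms are monotone with respect to the entrywise order on ordered singular values.

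With these in hand the argument is a short chain. Since $f$ is nondecreasing and $\sigma_j(A) \le \sqrt{2}\,\lambda_j(\Re A)$, the ordered eigenvalues of $f(|A|)$ are dominated by those of $f(\sqrt{2}\,\Re A)$, whence $\Vert f(|A|)\Vert \le \Vert f(\sqrt{2}\,\Re A)\Vert$. Applying Theorem \ref{lt} to the positive semidefinite matrix $\Re A$ with the (still concave, nonnegative) function $x \mapsto f(\sqrt{2}\,x)$, and using $(\Re A)_{ii} = \Re A_{ii}$, gives $\Vert f(\sqrt{2}\,\Re A)\Vert \le \Vert f(\sqrt{2}\,\Re A_{11})\Vert + \Vert f(\sqrt{2}\,\Re A_{22})\Vert$. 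Each block satisfies $W(A_{ii}) \subseteq W(A) \subseteq S_{\pi/4}$, so by Fan--Hoffman $\lambda_j(\Re A_{ii}) \le \sigma_j(A_{ii})$ and hence $\Vert f(\sqrt{2}\,\Re A_{ii})\Vert \le \Vert f(\sqrt{2}\,|A_{ii}|)\Vert$. Finally, concavity with $f(0) \ge 0$ yields $f(2s) \le 2f(s)$, so taking $s = \tfrac{\sqrt{2}}{2}\sigma_j(A_{ii})$ gives $f(\sqrt{2}\,\sigma_j(A_{ii})) \le 2 f(\tfrac{\sqrt{2}}{2}\sigma_j(A_{ii}))$ and therefore $\Vert f(\sqrt{2}\,|A_{ii}|)\Vert \le 2\Vert f(\tfrac{\sqrt{2}}{2}|A_{ii}|)\Vert$. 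Stringing these together produces exactly $\Vert f(|A|)\Vert \le 2\bigl(\Vert f(\tfrac{\sqrt{2}}{2}|A_{11}|)\Vert + \Vert f(\tfrac{\sqrt{2}}{2}|A_{22}|)\Vert\bigr)$.

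The main obstacle is the sectorial comparison $\sigma_j(A) \le \sqrt{2}\,\lambda_j(\Re A)$, since this is the only place the sector hypothesis does genuine work; the remaining ingredients ($W(A_{ii}) \subseteq W(A)$, $(\Re A)_{ii} = \Re A_{ii}$, monotonicity of the norm, and $f(2s)\le 2f(s)$) are routine. I expect to establish this bound either from the Loewner relations $\Re A \pm \Im A \ge 0$ that characterize $S_{\pi/4}$, or via the polar decomposition $A = U|A|$, which is the natural tool for comparing $|A|$ with $\Re A$ and is the route suggested by the sectorial equivalence conditions used elsewhere in the paper.
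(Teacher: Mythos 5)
Your outer skeleton (pass to $\Re A$, apply Theorem \ref{lt}, return to $|A_{ii}|$ via Fan--Hoffman, harvest the factor $2$ from $f(2s)\le 2f(s)$) is fine, but the step you yourself identify as the crux is false, and the proof collapses there. The pointwise comparison $\sigma_j(A)\le\sec(\alpha)\,\lambda_j(\Re A)$ does not hold for general sectorial matrices. Take
\[
A=\begin{bmatrix}1&\sqrt2\\ 0&1\end{bmatrix},
\]
whose numerical range is the closed disc of radius $\tfrac{\sqrt2}{2}$ centred at $1$, tangent to the lines $\Im z=\pm\Re z$, so $W(A)\subseteq S_{\pi/4}$. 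Here $\sigma_2(A)=\sqrt{2-\sqrt3}\approx0.518$ while $\sqrt2\,\lambda_2(\Re A)=\sqrt2-1\approx0.414$, so the inequality fails at $j=2$. The scalar bound $|z|\le\sqrt2\,\Re z$ on $S_{\pi/4}$ lifts to the trace, to $j=1$, even to weak majorization of $(\sigma_j(A))$ by $(\sqrt2\,\lambda_j(\Re A))$, but not to pointwise domination; and since concave functions do not preserve weak majorization, the first link of your chain is itself false: for the matrix above, the concave function $f(t)=\min\{t,\sqrt{2-\sqrt3}\}$ and the trace norm give $\Vert f(|A|)\Vert=2\sqrt{2-\sqrt3}\approx1.035$ but $\Vert f(\sqrt2\,\Re A)\Vert\approx0.932$. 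Neither of your proposed repairs (the Loewner relations $\Re A\pm\Im A\ge0$, or the polar decomposition of $A$) can rescue this, because the statement to be derived is simply untrue. Your comparison is valid when $A$ is normal --- which is precisely why the Yang--Lu--Chen result \cite{YL19} carries a normality hypothesis --- and the pointwise inequality that does hold for general sectorial matrices (due to Drury and Lin \cite{DL14}, and used by Fu and Liu \cite{FL16}) has $\sec^{2}\alpha$ in place of $\sec\alpha$, which is exactly why Fu--Liu's Rotfel'd bound contains $\sec^{2}\alpha$. Indeed, had your lemma been true, your argument would prove $\Vert f(|A|)\Vert\le\Vert f(\sec\alpha|A_{11}|)\Vert+\Vert f(\sec\alpha|A_{22}|)\Vert$ for all sectorial $A$, strictly improving the known results --- a red flag worth noticing.

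For comparison with the paper: this statement is quoted from \cite{ZP15} and not reproved here, but it is recovered as the case $s=1$, $\alpha=\pi/4$ of Theorem \ref{m2} (note $\tfrac{\sec(\pi/4)}{2}=\tfrac{\sqrt2}{2}$), and the proof there shows the correct substitute for your false lemma. It is an operator inequality rather than a pointwise one: Lemma \ref{3.1}(2) gives positivity of the block matrix $\begin{bmatrix}\sec\alpha\,\Re A& A^*\\ A& \sec\alpha\,\Re A\end{bmatrix}$, and Lemma \ref{t2.1} (polar decomposition of the off-diagonal block) then yields $|A|\le\tfrac{\sqrt2}{2}\left(\Re A+U^{*}\Re A\,U\right)$ for some unitary $U$. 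Because the right-hand side is a sum of two unitarily conjugated copies of $\Re A$, one must split $f$ of it by Aujla--Bourin (Lemma \ref{3.3}) before applying Theorem \ref{lt} and Corollary \ref{3.4}; the coefficient $2$ in the conclusion is the price of that splitting, not a cosmetic consequence of $f(2s)\le2f(s)$ as in your plan. Your remaining steps (Lee's theorem on $\Re A$ with $t\mapsto f(ct)$, Fan--Hoffman on the diagonal blocks, monotonicity of unitarily invariant norms) are all correct and coincide with the paper's usage; only the sectorial comparison needs to be replaced by this two-term operator bound.
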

Zhang \cite{ZP15} asked whether the coefficient $2$ in Theorem \ref{zpc} can be replaced by $1$. Several authors have considered this problem and given solutions under certain conditions. In \cite{HZ17},  Hou and Zhang gave an answer to the problem when $\Im A \in \mathbb{M}_{n}^{+}$. In \cite{ZN18}, when $A$ is normal, Zhao and Ni affirmed this problem. They proved that
\begin{thm}\cite[Theorem 3.4]{ZN18}
Let $A$ be partitioned as in (\ref{e1}). Suppose $f : [0,\infty) \to [0,\infty)$ is a concave function and let $A$ with $W(A) \subseteq S_{\alpha}$ for $\alpha \in [0,\pi/2)$ and $A$ is normal. and  Then
\begin{equation}
\Vert{f(|A|)}\Vert \leq \Vert{f(|A_{11}|)}\Vert+\Vert{f(|A_{22}|)}\Vert+\Vert{f(\tan(\alpha)|A_{11}|)}\Vert+\Vert{f(\tan(\alpha)|A_{22}|)}\Vert.\nonumber
\end{equation} 
\end{thm}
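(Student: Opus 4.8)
The plan is to descend from $|A|$ to the Hermitian part $\Re A$, exploit that $\Re A$ has the transparent diagonal blocks $\Re A_{11},\Re A_{22}$, apply Lee's theorem (Theorem \ref{lt}) there, and only at the very end trade each $\Re A_{jj}$ for $|A_{jj}|$.

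First I would record the consequences of normality together with the sector hypothesis. From $W(A)\subseteq S_{\alpha}$ we get $\Re A\ge 0$ and $\pm\Im A\le\tan(\alpha)\,\Re A$. Normality is equivalent to $[\Re A,\Im A]=0$, so $\Re A$ and $\Im A$ are simultaneously unitarily diagonalizable and $|A|^{2}=A^{*}A=(\Re A)^{2}+(\Im A)^{2}$. Writing $a_{j}\ge 0$ for the eigenvalues of $\Re A$ and $b_{j}$ for those of $\Im A$, the eigenvalues of $|A|$ are $\sqrt{a_{j}^{2}+b_{j}^{2}}$; from $\sqrt{a_{j}^{2}+b_{j}^{2}}\le a_{j}+|b_{j}|$ and $|b_{j}|\le\tan(\alpha)a_{j}$ I obtain the two matrix inequalities
\begin{equation*}
|A|\le \Re A+|\Im A|,\qquad |\Im A|\le\tan(\alpha)\,\Re A .
\end{equation*}
Both genuinely use normality: without commutativity even $|\Im A|\le\tan(\alpha)\Re A$ can fail, since $\pm H\le P$ does not imply $|H|\le P$ for non-commuting Hermitian $H$ and positive $P$.

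Next I would run the norm estimate. A non-negative concave $f$ on $[0,\infty)$ is automatically non-decreasing, so for positive semidefinite $X\le Y$ Weyl monotonicity gives $\lambda_{j}(f(X))=f(\lambda_{j}(X))\le f(\lambda_{j}(Y))=\lambda_{j}(f(Y))$, whence $\|f(X)\|\le\|f(Y)\|$ for every unitarily invariant norm by Fan dominance. Combining this with the Rotfel'd subadditivity $\|f(X+Y)\|\le\|f(X)\|+\|f(Y)\|$ for $X,Y\ge 0$ (which itself follows from Theorem \ref{lt}, since the positive semidefinite matrix $\left[\begin{smallmatrix} X & X^{1/2}Y^{1/2}\\ Y^{1/2}X^{1/2} & Y\end{smallmatrix}\right]$ has diagonal blocks $X,Y$ and shares its nonzero eigenvalues with $X+Y$), the two inequalities above yield
\begin{equation*}
\|f(|A|)\|\le\|f(\Re A+|\Im A|)\|\le\|f(\Re A)\|+\|f(|\Im A|)\|\le\|f(\Re A)\|+\|f(\tan(\alpha)\Re A)\| .
\end{equation*}
Since taking the Hermitian part commutes with extracting a principal submatrix, the diagonal blocks of $\Re A$ are exactly $\Re A_{11},\Re A_{22}$; applying Theorem \ref{lt} to $\Re A$ and to $\Re A$ with the non-negative concave function $x\mapsto f(\tan(\alpha)x)$ gives
\begin{equation*}
\|f(\Re A)\|\le\|f(\Re A_{11})\|+\|f(\Re A_{22})\|,\quad \|f(\tan(\alpha)\Re A)\|\le\|f(\tan(\alpha)\Re A_{11})\|+\|f(\tan(\alpha)\Re A_{22})\| .
\end{equation*}

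The last step, which I expect to be the main obstacle, is to replace each $\Re A_{jj}$ by $|A_{jj}|$; the difficulty is that a principal submatrix $A_{jj}$ of a normal matrix is typically \emph{not} normal, so none of the clean identities of the first paragraph survive block-wise. What does survive is the termwise inequality $\lambda_{k}(\Re A_{jj})\le\sigma_{k}(A_{jj})$, valid for \emph{any} square matrix: by Courant--Fischer, for every subspace $S$ and unit $x\in S$ one has $\langle\Re A_{jj}\,x,x\rangle=\Re\langle A_{jj}x,x\rangle\le\|A_{jj}x\|$, and taking $\max_{x\in S}$ followed by $\min$ over $S$ of the appropriate dimension gives the claim. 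Since each $A_{jj}$ is itself sectorial we have $\Re A_{jj}\ge 0$, hence $\sigma_{k}(f(\Re A_{jj}))=f(\lambda_{k}(\Re A_{jj}))\le f(\sigma_{k}(A_{jj}))=\sigma_{k}(f(|A_{jj}|))$ using that $f$ is non-decreasing; Fan dominance then gives $\|f(\Re A_{jj})\|\le\|f(|A_{jj}|)\|$ and, with $x\mapsto f(\tan(\alpha)x)$, also $\|f(\tan(\alpha)\Re A_{jj})\|\le\|f(\tan(\alpha)|A_{jj}|)\|$. Substituting these four estimates into the two displays above completes the proof.
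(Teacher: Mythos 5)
Your proof is correct, but note that the paper does not actually prove this statement: it is quoted from Zhao and Ni \cite{ZN18} as background, so the only meaningful comparison is with the machinery the paper uses for its own results (Theorem \ref{mt}). Your argument is sound at every step: normality is equivalent to $[\Re A,\Im A]=0$, which yields $|A|^{2}=(\Re A)^{2}+(\Im A)^{2}$ and, by simultaneous diagonalization, the two exact inequalities $|A|\le \Re A+|\Im A|$ and $|\Im A|\le\tan(\alpha)\Re A$ (your caveat that the latter genuinely needs commutativity is well taken); the descent to $\|f(\Re A)\|+\|f(\tan(\alpha)\Re A)\|$, the application of Theorem \ref{lt} to $\Re A\ge 0$ with diagonal blocks $\Re A_{11},\Re A_{22}$, and the final replacement of $\Re A_{jj}$ by $|A_{jj}|$ via Fan--Hoffman are all valid. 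Several of your re-derived ingredients are already stated in the paper and could simply be cited: your monotonicity step is Lemma \ref{le2.6}, your Fan--Hoffman step is Lemma \ref{3-3} together with Corollary \ref{3.4}, and your block-matrix derivation of the subadditivity $\|f(X+Y)\|\le\|f(X)\|+\|f(Y)\|$ (a nice self-contained trick) could be replaced by Lemma \ref{3.3} plus the triangle inequality. Structurally your proof follows the same skeleton as the paper's proof of Theorem \ref{mt} --- pass from $|A|$ to expressions in $\Re A$, apply Lee's theorem, then restore $|A_{ii}|$ --- but where the paper, lacking normality, must invoke Lemma \ref{3.2} (introducing unitary conjugations in $|\Re A+i\Im A|\le U_{1}\Re A U_{1}^{*}+V_{1}|\Im A|V_{1}^{*}$) and Lemmas \ref{3.1} and \ref{t2.1} (introducing the parameter $s$ and a further unitary in the bound on $|\Im A|$), you exploit commutativity to get both bounds exactly, with no unitaries and no splitting; this is precisely what removes the factor $2$ appearing in Theorem \ref{zpt} and produces the coefficient-one inequality of the statement.
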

Recently, Yang, Lu and Chen \cite{YL19} improved Theorem \ref{}. They proved
\begin{thm}\cite[Theorem 3.1]{YL19}
Let $f : [0,\infty) \to [0,\infty)$ be a concave function and let $A$ with $W(A) \subseteq S_{\alpha}$ for $\alpha \in [0,\pi/2)$ and assume $A$ is normal, and let $A$ be partitioned as in $(1)$. Then
\begin{equation}
\Vert{f(|A|)}\Vert \leq \Vert{f(\sec\alpha|A_{11}|)}\Vert+\Vert{f(\sec\alpha|A_{22}|)}\Vert.\nonumber
\end{equation} 
\end{thm}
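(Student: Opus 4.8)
The plan is to reduce the assertion to the positive semidefinite case, where Lee's theorem (Theorem \ref{lt}) already applies, by exploiting two separate monotonicity mechanisms: a Löwner comparison between $|A|$ and $\Re A$ that comes from normality, and the Fan--Hoffman domination $\lambda_j(\Re B)\le\sigma_j(B)$ that holds for each (possibly non-normal) diagonal block.

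First I would record the key consequence of normality. Writing $A=UDU^{*}$ with $D=\operatorname{diag}(\lambda_1,\dots,\lambda_n)$ and $\lambda_j=r_je^{i\theta_j}$, the hypothesis $W(A)\subseteq S_{\alpha}$ forces every eigenvalue into the sector, i.e.\ $|\theta_j|\le\alpha$. Since $A$ is normal, $|A|=U\operatorname{diag}(r_j)U^{*}$ and $\Re A=U\operatorname{diag}(r_j\cos\theta_j)U^{*}$ are simultaneously diagonalized by $U$, and $\cos\theta_j\ge\cos\alpha>0$ yields the operator inequality $|A|\le\sec\alpha\,\Re A$, with $\Re A\in\mathbb{M}_{n}^{+}$.

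Next I would upgrade this to a norm inequality. A nonnegative concave $f$ on $[0,\infty)$ is automatically nondecreasing, so from $\lambda_j(|A|)\le\sec\alpha\,\lambda_j(\Re A)$ (Weyl monotonicity) I obtain $\sigma_j(f(|A|))=f(\lambda_j(|A|))\le f(\sec\alpha\,\lambda_j(\Re A))=\sigma_j(f(\sec\alpha\,\Re A))$ for every $j$; the Fan dominance principle then gives $\|f(|A|)\|\le\|f(\sec\alpha\,\Re A)\|$. Applying Theorem \ref{lt} to the positive semidefinite matrix $\sec\alpha\,\Re A$, whose diagonal blocks are $\sec\alpha\,\Re A_{11}$ and $\sec\alpha\,\Re A_{22}$, splits the right-hand side as $\|f(\sec\alpha\,\Re A_{11})\|+\|f(\sec\alpha\,\Re A_{22})\|$.

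Finally I would pass from $\Re A_{kk}$ to $|A_{kk}|$ blockwise. Since $\Re A_{kk}$ is a principal block of $\Re A\ge 0$ it is itself positive semidefinite, and the Fan--Hoffman inequality $\lambda_j(\Re A_{kk})\le\sigma_j(A_{kk})=\lambda_j(|A_{kk}|)$, combined once more with the monotonicity of $f$ and Fan dominance, gives $\|f(\sec\alpha\,\Re A_{kk})\|\le\|f(\sec\alpha\,|A_{kk}|)\|$ for $k=1,2$. Chaining the three estimates produces the claimed bound. The main obstacle is the first step: normality is exactly what makes $|A|$ and $\Re A$ simultaneously diagonalizable and hence Löwner-comparable, and this is the only place the normality hypothesis is used. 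For a non-normal block one cannot expect such a Löwner inequality, which is precisely why the weaker eigenvalue domination of Fan--Hoffman is what must be invoked in the last step.
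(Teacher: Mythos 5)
Your proof is correct. Note first that the paper never proves this statement: it is quoted verbatim from \cite{YL19} as background, so there is no in-paper proof to compare against. Your argument is sound at every step: eigenvalues always lie in the numerical range, so $W(A)\subseteq S_{\alpha}$ puts each $\lambda_j=r_je^{i\theta_j}$ in the sector; normality makes $|A|$ and $\Re A$ simultaneously unitarily diagonalizable, giving the genuine L\"owner inequality $|A|\le\sec\alpha\,\Re A$ with $\Re A\ge 0$; Weyl monotonicity plus the fact that a nonnegative concave $f$ is nondecreasing (this is exactly the content of Lemma \ref{le2.6} of the paper) yields $\|f(|A|)\|\le\|f(\sec\alpha\,\Re A)\|$; Lee's theorem (Theorem \ref{lt}) applies to the positive semidefinite matrix $\sec\alpha\,\Re A$ whose diagonal blocks are $\sec\alpha\,\Re A_{11}$ and $\sec\alpha\,\Re A_{22}$; and Fan--Hoffman (Lemma \ref{3-3}, i.e.\ Corollary \ref{3.4} blockwise) converts $\Re A_{kk}$ into $|A_{kk}|$. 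It is worth observing that your skeleton is precisely the one the paper uses for its own Theorems \ref{mt} and \ref{m2}: an operator bound of $|A|$ by (multiples of) $\Re A$, then monotonicity, then Theorem \ref{lt} on $\Re A$, then Corollary \ref{3.4}. The difference is only in the first step: without normality the paper must fall back on Lemma \ref{3.1} combined with Lemma \ref{t2.1}, which produces the weaker two-term bound $|A|\le\frac{\sec\alpha}{2}(s\Re A+s^{-1}U^{*}\Re A\,U)$ involving an auxiliary unitary, and this is exactly why the non-normal constants are worse than the clean $\sec\alpha$ you obtain. Your closing remark correctly identifies normality as the sole ingredient enabling the one-term L\"owner comparison.
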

Fu and Liu \cite{FL16} also obtained the following generalization of Theorem \ref{lt}.
\begin{thm}\cite[Theorem 3]{FL16}
Let $f : [0,\infty) \to [0,\infty)$ be a concave function and let $A$ with $W(A) \subseteq S_{\alpha}$ for $\alpha \in [0,\pi/2)$ be partitioned as in $(1)$. Then
\begin{equation}
\Vert{f(|A|)}\Vert \leq \Vert{f(\sec^{2}\alpha|A_{11}|)}\Vert+\Vert{f(\sec^{2}\alpha|A_{22}|)}\Vert.
\end{equation} 
\end{thm}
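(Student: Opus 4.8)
The plan is to interpose the positive semidefinite matrix $\Re A$ between $|A|$ and the block data, apply Lee's theorem (Theorem~\ref{lt}) to $\sec^2\alpha\,\Re A$, and control the two resulting comparisons by two singular-value estimates that encode the sectoriality.

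First I would record the structural facts about sectorial matrices that drive everything. Writing $H=\Re A$ and $K=\Im A$, the condition $W(A)\subseteq S_{\alpha}$ is equivalent to $H\ge 0$ together with $-\tan\alpha\,H\le K\le\tan\alpha\,H$. Assuming first that $H>0$ (the general case follows by replacing $A$ with $A+\varepsilon I$ and letting $\varepsilon\downarrow 0$), this yields the factorization (an instance of the Alakhrass--Sababheh equivalence conditions)
\[
A = H^{1/2}\,(I+iK_{0})\,H^{1/2}, \qquad K_{0} := H^{-1/2}K H^{-1/2},
\]
where $K_{0}$ is Hermitian with $\|K_{0}\|\le\tan\alpha$, so that $\|I+iK_{0}\|=\max_{k}\sqrt{1+\mu_{k}^{2}}\le\sec\alpha$, the $\mu_{k}$ being the eigenvalues of $K_{0}$.

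The key lemma --- and the step I expect to be the main obstacle --- is the singular-value estimate
\[
\sigma_{j}(A)\le\sec^{2}\alpha\,\lambda_{j}(\Re A),\qquad j=1,\dots,n.
\]
This cannot be upgraded to an operator inequality $|A|\le\sec^{2}\alpha\,\Re A$ (that already fails for $2\times 2$ shears), so it must be proved at the level of the ordered singular values rather than by a L\"owner comparison; this is exactly where the non-commutativity of $H$ and $K$ bites. I would derive it from the factorization above, combining $\|I+iK_{0}\|\le\sec\alpha$ with a singular-value product/majorization inequality for the sandwich $H^{1/2}(I+iK_{0})H^{1/2}$, the point being to keep the outer factors paired as $\lambda_{j}(H)$ rather than letting them degrade to $\|H\|_{\mathrm{op}}$. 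I also record the standard companion estimate $\lambda_{j}(\Re B)\le\sigma_{j}(B)$, valid for every $B\in\mathbb{M}_{n}$, to be applied to the diagonal blocks.

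With the lemma in hand the assembly is routine. Since a non-negative concave $f$ on $[0,\infty)$ is automatically non-decreasing, the key lemma gives $\lambda_{j}\big(f(|A|)\big)=f(\sigma_{j}(A))\le f\big(\sec^{2}\alpha\,\lambda_{j}(\Re A)\big)=\lambda_{j}\big(f(\sec^{2}\alpha\,\Re A)\big)$ for all $j$, whence $\|f(|A|)\|\le\|f(\sec^{2}\alpha\,\Re A)\|$ by Fan dominance. Applying Theorem~\ref{lt} to the positive semidefinite matrix $\sec^{2}\alpha\,\Re A$ and using $(\Re A)_{jj}=\Re A_{jj}$ yields
\[
\|f(\sec^{2}\alpha\,\Re A)\|\le\|f(\sec^{2}\alpha\,\Re A_{11})\|+\|f(\sec^{2}\alpha\,\Re A_{22})\|.
\]
Finally, each compression satisfies $W(A_{jj})\subseteq W(A)\subseteq S_{\alpha}$, so $\Re A_{jj}\ge 0$, and the companion estimate $\lambda_{k}(\Re A_{jj})\le\sigma_{k}(A_{jj})$ gives $\|f(\sec^{2}\alpha\,\Re A_{jj})\|\le\|f(\sec^{2}\alpha\,|A_{jj}|)\|$ by the same monotonicity-plus-Fan-dominance argument, completing the proof.
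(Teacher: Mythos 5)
This statement is Fu--Liu's theorem, quoted in the paper from \cite{FL16} without proof, so there is no in-paper argument to compare against; note that the paper's own machinery (Lemma~\ref{t2.1} combined with Lemma~\ref{3.1}) only produces split bounds of the form $|A|\le \tfrac{\sec\alpha}{2}\left(s\Re A+s^{-1}U^{*}\Re A\,U\right)$, which is exactly why Theorems~\ref{mt} and~\ref{m2} carry two terms per block and cannot give a single $\sec^{2}\alpha$ term. Your assembly is the same as Fu--Liu's actual one: the entrywise inequality $\sigma_{j}(A)\le\sec^{2}\alpha\,\lambda_{j}(\Re A)$, then monotonicity of nonnegative concave $f$, Fan dominance, Lee's Theorem~\ref{lt} applied to $\sec^{2}\alpha\,\Re A$ (whose diagonal blocks are $\sec^{2}\alpha\,\Re A_{ii}$), and Fan--Hoffman (Lemma~\ref{3-3}) on the blocks. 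All of those steps are correct, and you rightly insist the comparison must be at the level of ordered singular values: weak majorization with a better constant is available but useless for concave $f$.

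The genuine gap is the key lemma itself, which you leave as a sketch. That inequality is precisely the theorem of Drury and Lin \cite{DL14}, and the mechanism you propose --- combining $\|I+iK_{0}\|\le\sec\alpha$ with a ``singular-value product/majorization inequality for the sandwich'' that keeps the outer factors paired as $\lambda_{j}(H)$ --- does not exist. For a general middle factor $T$ no such pairing holds with any power of $\|T\|$: with $H=\mathrm{diag}(1,\epsilon)$ and the unitary $T=\bigl(\begin{smallmatrix}0&1\\1&0\end{smallmatrix}\bigr)$ one has $\sigma_{2}\bigl(H^{1/2}TH^{1/2}\bigr)=\sqrt{\epsilon}$ while $\|T\|^{2}\lambda_{2}(H)=\epsilon$. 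Worse, any argument that used only $\|I+iK_{0}\|\le\sec\alpha$ together with pairing would output the constant $\sec\alpha$, which is false: for $A_{\epsilon}=\bigl(\begin{smallmatrix}1 & it\sqrt{\epsilon}\\ it\sqrt{\epsilon} & \epsilon\end{smallmatrix}\bigr)$ with $t=\tan\alpha$ one checks $\pm\Im A_{\epsilon}\le t\,\Re A_{\epsilon}$, so $W(A_{\epsilon})\subseteq S_{\alpha}$, yet $\sigma_{1}(A_{\epsilon})\sigma_{2}(A_{\epsilon})=|\det A_{\epsilon}|=\epsilon\sec^{2}\alpha$ and $\sigma_{1}(A_{\epsilon})\to 1$, hence $\sigma_{2}(A_{\epsilon})/\lambda_{2}(\Re A_{\epsilon})\to\sec^{2}\alpha$; the constant $\sec^{2}\alpha$ is sharp. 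So the special structure $\Re(I+iK_{0})=I$ must be exploited, and the known way is through the inverse: $\Re(A^{-1})=H^{-1/2}\,\Re\bigl((I+iK_{0})^{-1}\bigr)H^{-1/2}\ge\cos^{2}\alpha\,H^{-1}$, whence Fan--Hoffman applied to $A^{-1}$ gives $\sigma_{n-j+1}(A^{-1})\ge\lambda_{n-j+1}(\Re(A^{-1}))\ge\cos^{2}\alpha/\lambda_{j}(\Re A)$, and $\sigma_{j}(A)=1/\sigma_{n-j+1}(A^{-1})\le\sec^{2}\alpha\,\lambda_{j}(\Re A)$. Without this (or an equivalent) argument your proof is incomplete, since this lemma carries essentially all the content of the theorem.
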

In \cite{MJ24}, the authors gave a generalization of Theorem \ref{zpc} to a general angle $\alpha \in [0,\pi/2)$.
\begin{thm}\cite[Theorem 2.7]{MJ24}\label{t1.7}\label{ss}
Let $f : [0,\infty) \to [0,\infty)$ be a concave function and let $A$ with $W(A) \subseteq S_{\alpha}$ for $\alpha \in [0,\pi/2)$ be partitioned as in $(1)$. Then for any unitarily invariant norm $\Vert{\cdot}\Vert$ it holds
\begin{equation}
\Vert{f(|A|)}\Vert \leq 2\left(\Vert{f\left(\frac{\sec\alpha}{2}|A_{11}|\right)}\Vert+\Vert{f\left(\frac{\sec\alpha}{2}|A_{22}|\right)}\Vert \right).\nonumber
\end{equation}
\end{thm}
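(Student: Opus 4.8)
The plan is to prove the inequality by reducing the sectorial estimate to the positive semidefinite Rotfel'd theorem (Theorem \ref{lt}), emulating for a general angle the argument behind the $\alpha=\pi/4$ result (Theorem \ref{zpc}), of which this is the exact analogue with $\tfrac{\sqrt2}{2}=\tfrac{\sec(\pi/4)}{2}$ replaced by $\tfrac{\sec\alpha}{2}$. Throughout I would use two elementary facts about a concave $f:[0,\infty)\to[0,\infty)$: such an $f$ is nondecreasing, and it satisfies $f(2t)\le 2f(t)$ for every $t\ge 0$, since $f(t)\ge\tfrac12 f(2t)+\tfrac12 f(0)\ge\tfrac12 f(2t)$. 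The second fact is exactly what reconciles the outer factor $2$ with the inner factor $\tfrac12$: once a term $\|f(\sec\alpha\,|A_{ii}|)\|$ is produced, it is dominated by $2\,\|f(\tfrac{\sec\alpha}{2}|A_{ii}|)\|$, because $f$ is nondecreasing and $f(\sec\alpha\,\sigma_j(A_{ii}))=f(2\cdot\tfrac{\sec\alpha}{2}\sigma_j(A_{ii}))\le 2f(\tfrac{\sec\alpha}{2}\sigma_j(A_{ii}))$ at the level of singular values.

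First I would bring in the equivalence conditions of Alakhrass and Sababheh: $W(A)\subseteq S_\alpha$ is equivalent to $\Re A\ge 0$ together with $-\tan\alpha\,\Re A\le\Im A\le\tan\alpha\,\Re A$, and in turn to a representation $A=\cos\alpha\,(e^{i\alpha}X+e^{-i\alpha}Y)$ with $X,Y\in\mathbb{M}_{n}^{+}$ determined by $\Re A$ and $\Im A$. A natural attempt is to read this as a splitting $A=P+Q$ with $P=\cos\alpha\,e^{i\alpha}X$ and $Q=\cos\alpha\,e^{-i\alpha}Y$, so that $|P|=\cos\alpha\,X$ and $|Q|=\cos\alpha\,Y$ are positive semidefinite, and then to invoke the subadditivity $\|f(|P+Q|)\|\le\|f(|P|)\|+\|f(|Q|)\|$. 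This last inequality is valid for concave $f\ge 0$: Thompson's triangle inequality gives $|P+Q|\le U|P|U^{*}+V|Q|V^{*}$ for suitable unitaries, Weyl monotonicity and the monotonicity of $f$ turn this into a weak majorization, and the Bourin--Uchiyama inequality $\|f(M+N)\|\le\|f(M)\|+\|f(N)\|$ for $M,N\ge 0$ closes it. Applying Lee's theorem (Theorem \ref{lt}) to the positive pieces $\cos\alpha\,X$ and $\cos\alpha\,Y$, partitioned as in (\ref{e1}), then reduces everything to their diagonal blocks $\cos\alpha\,X_{ii}$ and $\cos\alpha\,Y_{ii}$.

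The hard part will be matching the constant. The diagonal blocks just produced equal $\cos\alpha\,X_{ii}=\tfrac12(\sec\alpha\,\Re A_{ii}+\csc\alpha\,\Im A_{ii})$, and these are in general \emph{not} dominated by $\tfrac{\sec\alpha}{2}|A_{ii}|$ in every unitarily invariant norm once $A_{ii}$ fails to be normal, so the naive splitting overshoots and does not by itself deliver the sharp constant. The genuine difficulty is therefore to estimate the off-diagonal blocks $A_{12},A_{21}$ and pass to the block moduli $|A_{ii}|$ in such a way that the sectorial factor is allowed to enter exactly once as $\sec\alpha$, after which $f(2t)\le 2f(t)$ halves it to $\tfrac{\sec\alpha}{2}$ and produces the outer factor $2$. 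In particular one must resist the tempting shortcut through a per-eigenvalue comparison such as $\sigma_j(A)\le\sec\alpha\,\lambda_j(\Re A)$, equivalently $\|f(|A|)\|\le\|f(\sec\alpha\,\Re A)\|$: this is simply false, as the example $A=\left[\begin{smallmatrix}1&1\\0&1\end{smallmatrix}\right]$, whose numerical range lies in $S_{\pi/6}$, already shows for $f(t)=\min\{t,1\}$. This confirms that the factor $2$ is intrinsic rather than cosmetic, and I expect the decisive step to be precisely a block-wise sectorial estimate, extracted from the Alakhrass--Sababheh analysis, that bounds $|A|$ by the block-diagonal moduli with constant $\sec\alpha$ only after the subadditive split has been performed.
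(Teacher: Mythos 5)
Your proposal is not a proof: it sets up a decomposition, correctly observes that the resulting bound does not deliver the stated constant, and then stops exactly where the real argument must begin, saying only that you ``expect the decisive step to be a block-wise sectorial estimate.'' That missing step is the heart of the matter, and it is where the paper does its work (the paper proves the more general Theorem \ref{m2}, with a free parameter $s>0$, and obtains the present statement at $s=1$). The key is to combine condition (2) of Lemma \ref{3.1} --- positivity of the block matrix $\begin{bmatrix} \sec(\alpha)\Re A & A^{*}\\ A & \sec(\alpha)\Re A \end{bmatrix}$ --- with Lemma \ref{t2.1}, which bounds the modulus of the off-diagonal block of a positive semidefinite block matrix by a mixture of the diagonal blocks, one of them unitarily conjugated. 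At $s=1$ this gives
\begin{equation*}
|A| \le \frac{\sec\alpha}{2}\left(\Re A + U^{*}\Re A\, U\right)
\end{equation*}
for some unitary $U$. This single inequality produces both the constant $\frac{\sec\alpha}{2}$ and the outer factor $2$ (two copies of $\Re A$) at once; the rest is routine: Lemma \ref{3.3} splits $f$ of the sum into two unitary conjugates, unitary invariance and the triangle inequality give $\|f(|A|)\| \le 2\|f(\frac{\sec\alpha}{2}\Re A)\|$, Lee's theorem (Theorem \ref{lt}) applied to the positive semidefinite matrix $\Re A$ passes to the diagonal blocks $\Re A_{ii}$, and Corollary \ref{3.4} (Fan--Hoffman) replaces $\Re A_{ii}$ by $|A_{ii}|$.

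Your route, as sketched, cannot be completed to give the constant $2$. After splitting $A=P+Q$ with $|P|=\cos\alpha\,X$, $|Q|=\cos\alpha\,Y$ and applying Lee's theorem, you hold four terms $\|f(\cos\alpha\,X_{ii})\|$, $\|f(\cos\alpha\,Y_{ii})\|$. The only bound available for these blocks uses $\pm\Im A_{ii}\le \tan\alpha\,\Re A_{ii}$ (each $W(A_{ii})\subseteq S_{\alpha}$), giving $\cos\alpha\,X_{ii}\le \sec\alpha\,\Re A_{ii}$ and likewise for $Y_{ii}$; combining this with Fan--Hoffman and your halving trick $f(2t)\le 2f(t)$ yields a total factor of $4$, not $2$ --- exactly the overshoot you anticipated. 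Nor is your alternative plan a shortcut: producing $\|f(|A|)\|\le \sum_{i}\|f(\sec\alpha\,|A_{ii}|)\|$ first and halving afterwards would require solving P.~Zhang's open question (coefficient $1$ in place of $2$), which is known only for normal $A$ (Yang--Lu--Chen). Your counterexample showing that $\sigma_{j}(A)\le\sec\alpha\,\lambda_{j}(\Re A)$ fails is correct and is a genuine insight, but the paper's Lemma \ref{t2.1} is designed precisely to circumvent it: by allowing a unitary conjugation and two summands, no per-singular-value comparison with $\Re A$ is ever needed.
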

In this paper, we improve Theorem \ref{zpt} and give a generalization of Theorem \ref{t1.7}.

\section{Improvements of Rotfel'd theorem for sectorial matrices}

In this section, we first list a few lemmas that are useful to derive our main results.
\begin{lemma} \label{t2.1}
	Let $\begin{bmatrix}
		A&X\\
		X^*&B
	\end{bmatrix}$ partitioned into four blocks in $\mathbb{M}_n$ be positive semidefinite. Then exists some unitary matrix $U$,  for every $s>0$,
	\[	
\left|X\right|\le \dfrac{s}{2}U^*AU+\dfrac{1}{2s}B,
	\]
	and exists some unitary matrix $V$,  for every $s>0$,
	\[
	\left|X^*\right|\le \dfrac{s}{2}A+\dfrac{1}{2s}V^*BV.
	\]
\end{lemma}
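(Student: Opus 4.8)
The plan is to reduce the off-diagonal block of the given positive semidefinite matrix to a Hermitian, positive factor by means of a polar decomposition, and then to read off the stated bound from a single weighted test vector. Recall that a positive semidefinite matrix stays positive semidefinite under any congruence $M \mapsto T^{*}MT$; I will use congruences by block-diagonal unitaries, which merely conjugate the diagonal blocks while rotating the off-diagonal block into $|X|$ or $|X^{*}|$. Throughout I use that for a $2\times 2$ positive semidefinite block matrix with Hermitian off-diagonal block, testing against $\begin{bmatrix} \sqrt{s}\,x \\ -x/\sqrt{s} \end{bmatrix}$ makes the cross terms combine into $-2x^{*}(\cdot)x$.

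For the first inequality, write the polar decomposition $X = U|X|$ with $U$ unitary, so that $U^{*}X = |X|$. Conjugating by $T = \mathrm{diag}(U,I)$ gives
\[ \begin{bmatrix} U^{*} & 0 \\ 0 & I \end{bmatrix} \begin{bmatrix} A & X \\ X^{*} & B \end{bmatrix} \begin{bmatrix} U & 0 \\ 0 & I \end{bmatrix} = \begin{bmatrix} U^{*}AU & |X| \\ |X| & B \end{bmatrix} \ge 0. \]
Now fix $s>0$ and test this matrix against $\begin{bmatrix} \sqrt{s}\,x \\ -x/\sqrt{s} \end{bmatrix}$ for arbitrary $x$; positivity yields $s\,x^{*}(U^{*}AU)x + s^{-1}x^{*}Bx \ge 2x^{*}|X|x$. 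Since this holds for every $x$, it is exactly the operator inequality $|X| \le \tfrac{s}{2}U^{*}AU + \tfrac{1}{2s}B$.

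The second inequality is entirely parallel, now rotating the $B$-side. Use the companion polar decomposition $X = |X^{*}|W$ with $W$ unitary (so that $XW^{*} = |X^{*}|$, where $|X^{*}| = (XX^{*})^{1/2}$), and conjugate by $\mathrm{diag}(I,W^{*})$ to obtain $\begin{bmatrix} A & |X^{*}| \\ |X^{*}| & WBW^{*} \end{bmatrix} \ge 0$. The same weighted test vector gives $|X^{*}| \le \tfrac{s}{2}A + \tfrac{1}{2s}WBW^{*}$, and relabeling the unitary as $V = W^{*}$ puts this in the advertised form $|X^{*}| \le \tfrac{s}{2}A + \tfrac{1}{2s}V^{*}BV$.

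I expect no serious obstacle: both claims collapse to the one-line test-vector computation once the off-diagonal block has been made Hermitian and positive. The only points needing a little care are that the diagonal blocks must have equal size for $|X|$, $U^{*}AU$ and $B$ to be comparable (so that $X$ is square and the polar decomposition with a genuine unitary factor is available even when $X$ is singular), and the verification that the block-diagonal congruence both preserves positive semidefiniteness and produces exactly $|X|$ (respectively $|X^{*}|$) in the off-diagonal position. Both are routine, so the substantive content is the weighted test-vector identity.
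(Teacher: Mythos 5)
Your proof is correct and takes essentially the same route as the paper's: both extract a fixed unitary from a polar decomposition of $X$ (resp.\ $X^*$) and then exploit positivity under an $s$-weighted, unitarily twisted congruence --- your rank-one test vectors $\begin{bmatrix}\sqrt{s}\,x\\ -x/\sqrt{s}\end{bmatrix}$ applied after symmetrizing the off-diagonal block are just the scalar form of the paper's compression by $\begin{bmatrix} sI\\ -U\end{bmatrix}$. The only cosmetic difference is that the paper proves the $|X^*|$ bound first and deduces the $|X|$ bound by conjugating with the block swap $\begin{bmatrix}0&I\\ I&0\end{bmatrix}$ and replacing $s$ by $1/s$, whereas you prove the two bounds in parallel from two polar decompositions.
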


\begin{proof}
	Let  the polar decomposition of $X^*$ be $X^* = U|X^*|$ and $s>0$.  Notice that
	\begin{eqnarray*}
		\begin{bmatrix}
			sI&-U^*\\
		\end{bmatrix}\begin{bmatrix}
			A&X\\
			X^*&B
		\end{bmatrix}\begin{bmatrix}
			sI\\
			-U
		\end{bmatrix}
		&=&s^2 A-sU^*X^*-sXU+U^*BU.\\
		&=&s^2 A-2s\left|X^*\right|+U^*BU\ge 0.
	\end{eqnarray*}
	That is,
\begin{eqnarray}\label{mr}
	\left|X^*\right|\le \dfrac{s}{2}A+\dfrac{1}{2s}U^*BU.
\end{eqnarray}
		Note that
	\[
		\begin{bmatrix}
		B&X^*\\
		X&A
	\end{bmatrix}=\begin{bmatrix}
		0&I\\
		I&0
	\end{bmatrix}	\begin{bmatrix}
	A&X\\
	X^*&B
\end{bmatrix}	\begin{bmatrix}
0&I\\
I&0
\end{bmatrix}\ge 0,
	\]
	Applying (\ref{mr}) to $\begin{bmatrix}
		B&X^*\\
		X&A
	\end{bmatrix}$ and replacing $s$ by $1/s$ yield the desired result.
\end{proof}

\begin{lemma}\cite[Theorem 2.2]{AS20}\label{3.1}
Let $A \in \mathbb{M}_{n}$ and $\alpha \in [0,\pi/2)$. Then the following relations are equivalent:
\begin{enumerate}[(1)]
\item $W(A) \subseteq S_{\alpha}$.

\item $\begin{bmatrix}
	sec(\alpha)\Re A&A^{*}\\
	A&sec(\alpha)\Re A
\end{bmatrix} \geq 0$.

\item $\begin{bmatrix}
	\tan(\alpha)\Re A&\Im A\\
	\Im A&tan(\alpha)\Re A
\end{bmatrix} \geq 0$.
\end{enumerate}
\end{lemma}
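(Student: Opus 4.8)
The plan is to prove the three-way equivalence by first translating the geometric condition (1) into a pair of operator inequalities, and then matching each of the block positivity conditions (2) and (3) against that pair. Writing $H=\Re A$ and $K=\Im A$, for a unit vector $x$ one has $\Re(x^*Ax)=x^*Hx$ and $\Im(x^*Ax)=x^*Kx$, so $W(A)\subseteq S_\alpha$ holds exactly when $x^*Hx\ge0$ and $|x^*Kx|\le\tan\alpha\,(x^*Hx)$ for all unit $x$. Letting $x$ range over all unit vectors, this is equivalent to
\[
\tan\alpha\,\Re A+\Im A\ge0,\qquad \tan\alpha\,\Re A-\Im A\ge0,
\]
together with $\Re A\ge0$, which for $\alpha>0$ follows by adding the two inequalities and is the only extra content when $\alpha=0$ (the positive semidefinite case, which I would dispose of directly). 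Thus the lemma reduces to showing that each of (2) and (3) is equivalent to this pair.

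For (1) $\Leftrightarrow$ (3) I would use the Hadamard-type unitary $W=\tfrac{1}{\sqrt2}\begin{bmatrix}I&I\\ I&-I\end{bmatrix}$. A direct computation gives
\[
W\begin{bmatrix}\tan\alpha\,\Re A&\Im A\\ \Im A&\tan\alpha\,\Re A\end{bmatrix}W=\begin{bmatrix}\tan\alpha\,\Re A+\Im A&0\\ 0&\tan\alpha\,\Re A-\Im A\end{bmatrix},
\]
so the block matrix in (3) is positive semidefinite if and only if both diagonal blocks are, which is precisely the pair of inequalities above.

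For (1) $\Leftrightarrow$ (2) I would argue the two directions separately. For (2) $\Rightarrow$ (1), I test the matrix in (2) on vectors of the form $\begin{bmatrix}x\\ e^{i\theta}x\end{bmatrix}$: the resulting quadratic form is $2\sec\alpha\,(x^*Hx)+2\Re\!\big(e^{-i\theta}x^*Ax\big)$, and minimizing over $\theta$ forces $\sec\alpha\,\Re(x^*Ax)\ge|x^*Ax|$ for every unit $x$, which is exactly $x^*Ax\in S_\alpha$. For the converse (1) $\Rightarrow$ (2), I first assume $\Re A>0$ and apply the congruence by $\mathrm{diag}\big((\Re A)^{-1/2},(\Re A)^{-1/2}\big)$; writing $\widetilde K=(\Re A)^{-1/2}\,\Im A\,(\Re A)^{-1/2}$ and $\widetilde A=I+i\widetilde K$, positivity of (2) becomes $\begin{bmatrix}\sec\alpha\,I&\widetilde A^*\\ \widetilde A&\sec\alpha\,I\end{bmatrix}\ge0$. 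Since the diagonal blocks are invertible, the Schur complement reduces this to $\widetilde A^*\widetilde A=I+\widetilde K^2\le\sec^2\alpha\,I$, i.e.\ $\|\widetilde K\|\le\tan\alpha$; the same normalization turns the pair of inequalities characterizing (1) into $\tan\alpha\,I\pm\widetilde K\ge0$, again equivalent to $\|\widetilde K\|\le\tan\alpha$.

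The main obstacle is the degenerate case where $\Re A$ is singular, since the congruence by $(\Re A)^{-1/2}$ and the Schur complement are then unavailable. I would handle it by perturbation: replace $A$ by $A+\varepsilon I$, whose real part $\Re A+\varepsilon I$ is strictly positive and which remains sectorial in $S_\alpha$ whenever $A$ is, apply the invertible case to conclude that the block matrix in (2) is positive semidefinite, and let $\varepsilon\to0^+$ using that the positive semidefinite cone is closed. Combined with the unconditional implication (2) $\Rightarrow$ (1) via test vectors, this yields (1) $\Leftrightarrow$ (2) in full generality, and together with (1) $\Leftrightarrow$ (3) it completes the proof.
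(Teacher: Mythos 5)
The paper does not prove this lemma at all: it is quoted from Alakhrass and Sababheh \cite[Theorem 2.2]{AS20} and used as a black box, so there is no internal proof to compare your argument against. Judged on its own, your proof is essentially correct and self-contained: the reduction of (1) to the pair $\tan\alpha\,\Re A\pm\Im A\ge 0$ together with $\Re A\ge 0$, the diagonalization of the block matrix in (3) by the unitary $W=\tfrac{1}{\sqrt2}\begin{bmatrix}I&I\\ I&-I\end{bmatrix}$, the test-vector argument with $\begin{bmatrix}x\\ e^{i\theta}x\end{bmatrix}$ for (2)$\Rightarrow$(1), the Schur-complement computation after the congruence by $\mathrm{diag}\bigl((\Re A)^{-1/2},(\Re A)^{-1/2}\bigr)$ for (1)$\Rightarrow$(2) when $\Re A>0$, and the $\varepsilon$-perturbation to remove the invertibility assumption (using $W(A+\varepsilon I)=W(A)+\varepsilon\subseteq S_\alpha$ and closedness of the positive semidefinite cone) are all sound.

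One caveat: at $\alpha=0$ the equivalence (1)$\Leftrightarrow$(3), as literally stated, is false, so your plan to ``dispose of that case directly'' cannot succeed for (3). Indeed, for $\alpha=0$ condition (3) reads $\begin{bmatrix}0&\Im A\\ \Im A&0\end{bmatrix}\ge 0$, which forces $\Im A=0$ but says nothing about $\Re A$; the $1\times 1$ matrix $A=-1$ satisfies (3) but not (1), while (2) correctly fails. Equivalently, your claim that (3) matches your pair of inequalities \emph{together with} $\Re A\ge 0$ breaks down exactly when $\tan\alpha=0$; for $\alpha\in(0,\pi/2)$, where positivity of the diagonal blocks of (3) already gives $\Re A\ge 0$, your argument is complete. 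This defect lies in the statement itself rather than in your method (it disappears under the standing assumption $\Re A\ge 0$, i.e.\ for the sectorial matrices to which the present paper applies the lemma), but your write-up should either add that hypothesis to the equivalence with (3) or restrict it to $\alpha>0$, rather than suggest the $\alpha=0$ case is routine.
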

\begin{lemma}\cite[Theorem III.5.6]{BR97}\label{3.2}
Let $A$, $B \in \mathbb{M}_{n}$. Then there exist two unitary matrices $U$, $V$ such that
\begin{equation*}
|A+B| \leq U|A|U^{*}+V|B|V^{*}.
\end{equation*}
\end{lemma}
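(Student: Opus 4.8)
The plan is to reduce the claim to two standard facts about Hermitian matrices, organized around a polar decomposition of the sum $A+B$. First I would take a polar decomposition $A+B = W|A+B|$ with $W$ unitary, so that $|A+B| = W^*(A+B) = W^*A + W^*B$. Since the left-hand side is positive semidefinite, hence Hermitian, I may replace the right-hand side by its Hermitian part and write $|A+B| = \Re(W^*A) + \Re(W^*B)$, where $\Re X = \frac12(X+X^*)$. This splits the target into a sum of two Hermitian summands, each tied to exactly one of $A$ and $B$, which is what eventually produces the two separate unitaries.

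The second ingredient is the pointwise domination of the eigenvalues of a Hermitian part by the singular values of the matrix: for every $Y \in \mathbb{M}_{n}$ and every $j$, $\lambda_j(\Re Y) \le \sigma_j(Y)$. I would prove this from the min-max characterizations $\lambda_j(\Re Y) = \min_{\dim S = n-j+1}\max_{x\in S,\|x\|=1}\Re(x^*Yx)$ and $\sigma_j(Y) = \min_{\dim S = n-j+1}\max_{x\in S,\|x\|=1}\|Yx\|$, combined with the elementary bound $\Re(x^*Yx) \le |x^*Yx| \le \|Yx\|$ for a unit vector $x$. Applying this to $Y = W^*A$, whose singular values coincide with those of $A$ since $W^*$ is unitary, gives $\lambda_j(\Re(W^*A)) \le \sigma_j(A) = \lambda_j(|A|)$, and likewise for $B$.

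The third ingredient upgrades an eigenvalue inequality to a unitary-congruence operator inequality: if $P,Q$ are Hermitian with $\lambda_j(P)\le\lambda_j(Q)$ for all $j$, then $P \le UQU^*$ for some unitary $U$. I would prove this by aligning eigenbases, choosing orthonormal eigenbases of $P$ and $Q$ ordered by decreasing eigenvalue and letting $U$ carry one to the other, so that $UQU^*-P$ is diagonal with nonnegative entries in the eigenbasis of $P$. Applying this to $(P,Q)=(\Re(W^*A),|A|)$ and to $(\Re(W^*B),|B|)$ furnishes unitaries $U,V$ with $\Re(W^*A)\le U|A|U^*$ and $\Re(W^*B)\le V|B|V^*$; adding these and invoking the identity from the first step yields $|A+B|\le U|A|U^*+V|B|V^*$.

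The main obstacle is really the conceptual content of the second step: recognizing that the Hermitian part of a matrix is dominated, singular value by singular value, by the matrix itself, and at the level of individual indices rather than in the weaker sense of weak majorization of the full spectra (weak majorization alone would not deliver the clean unitary-congruence bound needed in the third step). Once $\lambda_j(\Re Y)\le\sigma_j(Y)$ is in hand the remaining assembly is routine. I would also note that the second step is exactly where one could instead invoke the positive semidefiniteness of $\begin{bmatrix}|A^*| & A\\ A^* & |A|\end{bmatrix}$, but the min-max route is the most self-contained.
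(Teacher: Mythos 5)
Your proof is correct, and it is essentially the argument behind the paper's citation: the paper does not prove this lemma itself but quotes it as Theorem III.5.6 of Bhatia's \emph{Matrix Analysis} (Thompson's matrix triangle inequality), and the proof there is exactly your route --- the polar decomposition $A+B=W|A+B|$ giving $|A+B|=\Re(W^{*}A)+\Re(W^{*}B)$, the Fan--Hoffman inequality $\lambda_{j}(\Re Y)\leq\sigma_{j}(Y)$, and the upgrade from termwise eigenvalue dominance to a unitary-congruence inequality. Note also that your second and third ingredients taken together are precisely the paper's own Lemma \ref{3-3}, which it states in both the eigenvalue form and the equivalent form $\Re A \leq U|A|U^{*}$.
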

\begin{lemma}\cite[Theorem 2.1]{AB07}\label{3.3}
Let $f : [0,\infty) \to [0,\infty)$ be a concave function and $A$, $B \in \mathbb{M}_{n}^{+}$. Then there exist two unitary matrices $U$, $V \in \mathbb{M}_{n}$ such that
\begin{equation*}
f(A+B) \leq U^{*}f(A)U+V^{*}f(B)V.
\end{equation*}
\end{lemma}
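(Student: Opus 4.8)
The plan is to prove the inequality by a dilation argument that reduces it to a subadditivity inequality for the two complementary block-compressions of a single positive semidefinite matrix, and then to descend from $\mathbb{M}_{2n}$ back to $\mathbb{M}_n$ by compression. First I would record two preliminary reductions. Since $f$ is concave and non-negative on $[0,\infty)$, its right-hand derivative cannot be negative anywhere (a negative slope would, by concavity, force $f$ below $0$ for large arguments), so $f$ is non-decreasing. Writing $c=f(0)\ge 0$ and $g=f-c$, the function $g$ is concave, non-decreasing and vanishes at $0$, hence $g\ge 0$; and because $f(X)=cI+g(X)$ for every $X\in\mathbb{M}_n^{+}$ while $cI\le 2cI$, any pair of unitaries realizing the inequality for $g$ also realizes it for $f$. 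Thus I may assume from now on that $f(0)=0$.

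Next I would introduce the Gram dilation
\[
Z=\begin{bmatrix}A^{1/2}\\ B^{1/2}\end{bmatrix}\begin{bmatrix}A^{1/2}&B^{1/2}\end{bmatrix}=\begin{bmatrix}A&A^{1/2}B^{1/2}\\ B^{1/2}A^{1/2}&B\end{bmatrix}\in\mathbb{M}_{2n}^{+}.
\]
Its diagonal blocks are exactly $A$ and $B$; moreover $Z=R^{*}R$ with $R=[\,A^{1/2}\ B^{1/2}\,]$ and $RR^{*}=A+B$, so $Z$ and $A+B$ have the same non-zero eigenvalues. Hence $Z$ is unitarily equivalent to $(A+B)\oplus 0_n$, and since $f(0)=0$, the matrix $f(Z)$ is unitarily equivalent to $f(A+B)\oplus 0_n$, say $f(Z)=W\big(f(A+B)\oplus 0_n\big)W^{*}$ for a unitary $W\in\mathbb{M}_{2n}$.

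The heart of the argument is the Loewner-order inequality $(\star)$: setting $P=I_n\oplus 0_n$ and $Q=0_n\oplus I_n$, so that $PZP=A\oplus 0$ and $QZQ=0\oplus B$, there exist unitaries $U,V\in\mathbb{M}_{2n}$ with
\[
f(Z)\le U\left(f(A)\oplus 0\right)U^{*}+V\left(0\oplus f(B)\right)V^{*}.
\]
Granting $(\star)$, I would conjugate it by $W^{*}$ and then compress both sides to the top-left $n\times n$ corner by the isometry $E=\begin{bmatrix}I_n\\ 0\end{bmatrix}$. The left side becomes $E^{*}\big(f(A+B)\oplus 0\big)E=f(A+B)$, while each right-hand term becomes a compression $M^{*}\big(f(A)\oplus 0\big)M$ (respectively for $f(B)$), where $M=U^{*}WE$ is an isometry. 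Since $f(A)\oplus 0$ is positive semidefinite with top $n$ eigenvalues equal to those of $f(A)$, Cauchy interlacing gives $\lambda_j\big(M^{*}(f(A)\oplus 0)M\big)\le\lambda_j(f(A))$ for all $j$, whence by the Weyl-type domination (if $\lambda_j(H)\le\lambda_j(K)$ for all $j$ then $H\le U_0^{*}KU_0$ for some unitary $U_0$) this term is bounded by $U_0^{*}f(A)U_0$ for a suitable $U_0\in\mathbb{M}_n$; treating the second term likewise yields $f(A+B)\le U_0^{*}f(A)U_0+V_0^{*}f(B)V_0$, which is the assertion.

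The main obstacle is $(\star)$, the subadditivity of $f$ across the two complementary block-compressions of $Z$; it is precisely the Loewner-order form underlying Lee's Theorem~\ref{lt}. The tempting shortcut—applying an operator Jensen inequality to the pinching identity $A\oplus B=\tfrac12\big(Z+WZW^{*}\big)$ with $W=I_n\oplus(-I_n)$—is \emph{not} available, because $f$ is only scalar-concave rather than operator-concave, so the inequality $f(\tfrac12 Z+\tfrac12 WZW^{*})\ge\tfrac12 f(Z)+\tfrac12 Wf(Z)W^{*}$ fails in general. Consequently $(\star)$ has to be obtained by the pinching/eigenvalue technique of Bourin–Uchiyama, using the majorization of the eigenvalues of $A\oplus B$ by those of $Z$ together with the scalar subadditivity $f(s+t)\le f(s)+f(t)$ to dominate $f(Z)$ by the two unitary orbits. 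This controlled passage from scalar concavity to an operator inequality involving \emph{exactly} two unitaries is the delicate point, and it is where essentially all the work of the proof is concentrated.
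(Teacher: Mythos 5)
The paper offers no proof of this lemma to compare against: it is imported verbatim from Aujla and Bourin \cite{AB07}, so your attempt stands or falls as a self-contained argument, and as such it has a genuine gap. Your reductions are all sound: the shift to $f(0)=0$, the Gram dilation $Z\cong (A+B)\oplus 0_n$, and the descent from $(\star)$ back to $\mathbb{M}_n$ via interlacing plus the one-unitary domination trick are correct. But every bit of real content has been funneled into $(\star)$, which you do not prove --- you say yourself it is ``where essentially all the work of the proof is concentrated.'' Worse, $(\star)$ is not an auxiliary fact but an equivalent reformulation of the lemma itself: for any positive semidefinite $Z=\begin{bmatrix}A&X\\X^{*}&B\end{bmatrix}$ one writes $Z=R^{*}R$ with $R=\begin{bmatrix}C&D\end{bmatrix}$, $A=C^{*}C$, $B=D^{*}D$, so that $Z\cong (CC^{*}+DD^{*})\oplus 0_n$, and $(\star)$ then follows from the present lemma applied to $CC^{*}$ and $DD^{*}$. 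In the literature the implication runs in exactly this direction: Lee's Theorem \ref{lt}, the norm shadow of $(\star)$, is derived \emph{from} the Aujla--Bourin inequality, so completing your argument by appealing to Lee-type block results would be circular.

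Nor does your sketched repair close the gap. Eigenvalue majorization plus scalar subadditivity yields the two-index Weyl-type bounds $\lambda_{i+j-1}(f(Z))\le \lambda_i\bigl(f(A)\oplus 0\bigr)+\lambda_j\bigl(0\oplus f(B)\bigr)$, but there is no off-the-shelf lemma converting such bounds into a Loewner-order domination by \emph{two} unitary orbits; your one-unitary trick (if $\lambda_j(H)\le\lambda_j(K)$ for all $j$, then $H\le U_0^{*}KU_0$) has no two-term analogue, and splitting $f(Z)$ into two correctly dominated positive summands is precisely the hard step. A correct completion --- essentially the original route of \cite{AB07} --- avoids the dilation altogether: set $C=A+B$ and write $A=C^{1/2}MC^{1/2}$, $B=C^{1/2}NC^{1/2}$ with $0\le M,N$ and $M+N$ equal to the range projection of $C$; since $f(0)=0$ forces $\mathrm{ran}\,f(C)\subseteq \mathrm{ran}\,C$, one gets the exact decomposition $f(C)=f(C)^{1/2}Mf(C)^{1/2}+f(C)^{1/2}Nf(C)^{1/2}$, each summand is unitarily similar to $M^{1/2}f(C)M^{1/2}$ (resp.\ $N^{1/2}f(C)N^{1/2}$), and the compression--eigenvalue inequality $\lambda_j\bigl(K^{*}f(C)K\bigr)\le f\bigl(\lambda_j(K^{*}CK)\bigr)$, valid for contractions $K$ and concave $f\ge 0$ with $f(0)=0$, bounds these summands by unitary conjugates of $f(A)$ and $f(B)$ (note $M^{1/2}CM^{1/2}$ has the spectrum of $A$), after which your own domination trick finishes. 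That compression lemma is the missing engine of the whole proof, and nothing in your proposal supplies it.
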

A celebrated result due to Fan and Hoffman (see, e.g.\cite[p.73]{BR97}) is as follows.
\begin{lemma}\label{3-3}
Let $A \in \mathbb{M}_{n}$. Then
\begin{equation*}
\lambda_{j}(\Re A) \leq \sigma_{j}(A),\quad j=1,\cdots,n.
\end{equation*}
Equivalently, 
\begin{equation*}
\Re A \leq U|A|U^{*},
\end{equation*}
for some unitary matrix $U \in \mathbb{M}_{n}$.
\end{lemma}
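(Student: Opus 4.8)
The plan is to prove the scalar eigenvalue inequalities $\lambda_{j}(\Re A) \leq \sigma_{j}(A)$ first, and then to read off the equivalent matrix inequality by a simultaneous diagonalization argument. For the scalar part the natural tool is the Courant--Fischer min--max principle. For a Hermitian $H$ with eigenvalues ordered decreasingly one has
\[
\lambda_{j}(H) = \max_{\dim S = j}\ \min_{x \in S,\, \|x\|=1} x^{*}Hx,
\]
while the corresponding characterization of singular values reads
\[
\sigma_{j}(A) = \max_{\dim S = j}\ \min_{x \in S,\, \|x\|=1} \|Ax\|.
\]
These two identities put the eigenvalues of $\Re A$ and the singular values of $A$ on a common footing, so the whole argument reduces to a pointwise comparison of the two objective functions.

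The key estimate is that for every unit vector $x$,
\[
x^{*}(\Re A)x = \Re(x^{*}Ax) \leq |x^{*}Ax| \leq \|Ax\|,
\]
where the last inequality is Cauchy--Schwarz. Since this holds for all $x$, for each fixed $j$-dimensional subspace $S$ the minimum of $x^{*}(\Re A)x$ over the unit sphere of $S$ is dominated by the minimum of $\|Ax\|$ there: if $x_{0}\in S$ attains $\min_{x\in S}\|Ax\|$, then $\min_{x\in S} x^{*}(\Re A)x \le x_{0}^{*}(\Re A)x_{0} \le \|Ax_{0}\|$. Taking the maximum over all such $S$ yields $\lambda_{j}(\Re A) \leq \sigma_{j}(A)$ for each $j$.

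It remains to pass to the matrix form. Recall $\sigma_{j}(A) = \lambda_{j}(|A|)$, and diagonalize $\Re A = V\Lambda V^{*}$ and $|A| = WDW^{*}$ with $\Lambda, D$ diagonal and their entries arranged decreasingly. The inequalities just proved say precisely $\Lambda \leq D$ as diagonal matrices, hence $V\Lambda V^{*} \leq VDV^{*}$; setting $U = VW^{*}$ gives $U|A|U^{*} = VDV^{*} \geq V\Lambda V^{*} = \Re A$, which is the asserted $\Re A \leq U|A|U^{*}$. The converse is immediate from Weyl monotonicity, since $\Re A \leq U|A|U^{*}$ forces $\lambda_{j}(\Re A) \leq \lambda_{j}(U|A|U^{*}) = \sigma_{j}(A)$. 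I expect the only delicate point to be this equivalence step: one must use the correct min--max convention in the scalar part, and the implication from the eigenvalue inequalities to the matrix inequality relies on the rotation $U = VW^{*}$ rather than on any naive monotonicity, because $\Re A$ and $|A|$ need not commute.
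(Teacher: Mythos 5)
Your proof is correct. Note that the paper itself offers no proof of this lemma: it is stated as the classical Fan--Hoffman result with a citation to Bhatia's \emph{Matrix Analysis} (p.~73), and your argument --- the min--max principle combined with the pointwise estimate $x^{*}(\Re A)x = \Re(x^{*}Ax) \leq |x^{*}Ax| \leq \|Ax\|$ --- is essentially the standard proof found in that reference. Your handling of the equivalence step is also sound: the diagonalizations $\Re A = V\Lambda V^{*}$, $|A| = WDW^{*}$ with decreasingly ordered diagonals turn the scalar inequalities into $\Lambda \leq D$, and conjugating by $U = VW^{*}$ gives the matrix form, while the converse direction via Weyl monotonicity is exactly right.
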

\begin{lemma}\label{le2.6}
Let $A\ge B\ge 0$ and $f:[0,\infty)\to[0,\infty)$ be a concave function. Then
\begin{equation*}
		\|f(A )\|\ge \|f(B)\|.\end{equation*}
\end{lemma}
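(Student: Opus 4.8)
The plan is to reduce this unitarily invariant norm inequality to a coordinatewise comparison of singular values, so that the matrix statement follows from a scalar monotonicity argument. The first key observation is that the hypothesis $f \geq 0$ forces $f$ to be nondecreasing on $[0,\infty)$: if the right-hand derivative of the concave function $f$ were strictly negative at some point $x_0$, then by concavity the slope would stay at most that negative value for all $x > x_0$, forcing $f(x) \to -\infty$ and contradicting $f \geq 0$. This monotonicity is where the hypothesis $f \geq 0$ enters essentially, and I expect it to be the main (though short) obstacle, since the conclusion is simply false for concave functions that are permitted to decrease.

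Next, from $A \geq B \geq 0$ I would invoke Weyl's monotonicity principle (via the min--max characterization of eigenvalues) to obtain $\lambda_{j}(A) \geq \lambda_{j}(B) \geq 0$ for every $j$. Applying the nondecreasing function $f$ preserves these inequalities, giving $f(\lambda_{j}(A)) \geq f(\lambda_{j}(B)) \geq 0$. I would then identify the singular values of $f(A)$ and $f(B)$: since $f \geq 0$, both $f(A)$ and $f(B)$ are positive semidefinite, and via the spectral decomposition their eigenvalues are exactly $f(\lambda_{j}(A))$ and $f(\lambda_{j}(B))$. Because $f$ is nondecreasing, these values are already arranged in decreasing order once the $\lambda_{j}$ are, so $\sigma_{j}(f(A)) = f(\lambda_{j}(A))$ and $\sigma_{j}(f(B)) = f(\lambda_{j}(B))$. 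Combining with the previous step yields $\sigma_{j}(f(A)) \geq \sigma_{j}(f(B)) \geq 0$ for all $j$.

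Finally, I would close the argument by appealing to the monotonicity of unitarily invariant norms with respect to singular values: every such norm is generated by a symmetric gauge function, which is nondecreasing in each nonnegative coordinate, so the coordinatewise domination $\sigma_{j}(f(A)) \geq \sigma_{j}(f(B))$ gives $\|f(A)\| \geq \|f(B)\|$, as desired. All steps after the monotonicity of $f$ are standard facts about eigenvalue ordering and symmetric gauge functions, so the only delicate point is the initial reduction showing that a nonnegative concave function on $[0,\infty)$ must be nondecreasing.
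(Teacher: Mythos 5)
Your proposal is correct and follows essentially the same route as the paper's own proof: nonnegativity plus concavity forces $f$ to be nondecreasing, Weyl's monotonicity principle gives $\lambda_j(A)\ge\lambda_j(B)$ and hence $\lambda_j(f(A))\ge\lambda_j(f(B))$ for all $j$, and the coordinatewise domination of singular values yields the inequality for every unitarily invariant norm. Your write-up merely supplies details the paper leaves implicit (the proof that $f$ is nondecreasing and the appeal to symmetric gauge function monotonicity), which is a welcome but not substantively different elaboration.
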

\begin{proof}
Note that  $f:[0,\infty)\to[0,\infty)$ is a concave function implies $f$ is nondecreasing. By using Weyl Monotonicity Theorem \cite[p. 63]{BR97}, we have $\lambda_j(f(A))=f(\lambda_j(A))\ge f(\lambda_j(B))=\lambda_jf((B)), 1\le j \le n$. Thus, $\|f(A )\|\ge \|f(B)\|.$
\end{proof}
It immediately follows from \ref{3-3} that
\begin{cor}\label{3.4}
	Let $A$ be a sectorial matrix and $f:[0,\infty)\to[0,\infty)$ be a concave function. Then
\begin{equation*}
		\|f(\Re A )\|\leq \|f(|A|)\|.\end{equation*}
\end{cor}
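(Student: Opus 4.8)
The plan is to deduce the corollary directly from the operator-inequality form of the Fan--Hoffman lemma (Lemma \ref{3-3}) combined with the norm monotonicity recorded in Lemma \ref{le2.6}. First I would note that since $A$ is sectorial we have $\Re A \in \mathbb{M}_n^{+}$, so $\Re A$ is positive semidefinite and $f(\Re A)$ is well defined through the functional calculus on $[0,\infty)$; likewise $|A| \ge 0$ makes $f(|A|)$ legitimate. This observation is what licenses writing $f$ of these two matrices in the first place.

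Next, Lemma \ref{3-3} supplies a unitary matrix $U \in \mathbb{M}_n$ with $\Re A \le U|A|U^{*}$. Both sides are Hermitian and positive semidefinite, with $U|A|U^{*} \ge \Re A \ge 0$, so the pair is comparable in the sense required by Lemma \ref{le2.6}. I would then apply that lemma with $U|A|U^{*}$ and $\Re A$ in the roles of $A$ and $B$ to obtain
\[
\|f(U|A|U^{*})\| \ge \|f(\Re A)\|.
\]

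Finally, I would use that the functional calculus commutes with unitary conjugation, namely $f(U|A|U^{*}) = U f(|A|) U^{*}$, so that unitary invariance of the norm gives $\|f(U|A|U^{*})\| = \|f(|A|)\|$. Combining this with the previous display yields $\|f(|A|)\| \ge \|f(\Re A)\|$, which is precisely the claimed inequality.

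Since every step is an immediate invocation of an already-established lemma, I do not expect a substantial obstacle here; the only points deserving care are verifying that $\Re A$ is genuinely positive semidefinite (so that $f(\Re A)$ is well defined) and that $U|A|U^{*}$ and $\Re A$ form a comparable pair of positive semidefinite matrices, which is exactly the content of Lemma \ref{3-3}. This is in line with the paper's remark that the result ``immediately follows'' from Lemma \ref{3-3}.
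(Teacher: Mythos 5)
Your proof is correct and is exactly the argument the paper intends: its phrase ``it immediately follows from Lemma \ref{3-3}'' compresses precisely your three steps, namely the Fan--Hoffman operator inequality $\Re A \le U|A|U^{*}$, the norm monotonicity of Lemma \ref{le2.6}, and unitary invariance via $f(U|A|U^{*}) = Uf(|A|)U^{*}$. Your write-up simply makes explicit what the paper leaves implicit, so there is nothing to correct.
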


Next, we present our main results.
\begin{thm}\label{mt}
Let $f : [0,\infty) \to [0,\infty)$ be a concave function and let $A$ with $W(A) \subseteq S_{\alpha}$ for $\alpha \in [0,\pi/2)$ be partitioned as in (\ref{e1}). Then for every $s>0$,
\begin{equation*}
\Vert{f(|A|)}\Vert \leq \sum_{i=1}^2\left(\|f(\left|  A_{ii}\right| )\|+ \|f(\frac{s\tan\alpha}{2}\left|  A_{ii}\right| )\|+\|f(\frac{\tan\alpha}{2s}\left|  A_{ii}\right| \|\right).
\end{equation*} 
\end{thm}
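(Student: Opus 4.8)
The plan is to split the bound into a real-part contribution and an imaginary-part contribution, reduce each to the diagonal blocks via Lee's theorem (Theorem~\ref{lt}) and Fan--Hoffman (Lemma~\ref{3-3}), and let the free parameter $s$ enter only through Lemma~\ref{t2.1}.

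First I would start from the Cartesian decomposition $A=\Re A+i\Im A$ and apply Bhatia's triangle inequality (Lemma~\ref{3.2}). Since $W(A)\subseteq S_\alpha$ forces $\Re A\in\mathbb{M}_n^+$, we have $|\Re A|=\Re A$ and $|i\Im A|=|\Im A|$, so there are unitaries $U,V$ with $|A|\le U(\Re A)U^*+V|\Im A|V^*$. As $f$ is concave and nonnegative it is nondecreasing, so Weyl monotonicity promotes this to $\|f(|A|)\|\le\|f(U(\Re A)U^*+V|\Im A|V^*)\|$; the concave subadditivity lemma (Lemma~\ref{3.3}) together with the triangle inequality and unitary invariance then yields
\[
\|f(|A|)\|\le\|f(\Re A)\|+\|f(|\Im A|)\|.
\]
For the first summand, $\Re A$ is positive semidefinite and partitioned with diagonal blocks $\Re A_{11},\Re A_{22}$, so Theorem~\ref{lt} gives $\|f(\Re A)\|\le\|f(\Re A_{11})\|+\|f(\Re A_{22})\|$; Lemma~\ref{3-3} supplies unitaries with $\Re A_{ii}\le U_i|A_{ii}|U_i^*$, and Lemma~\ref{le2.6} bounds each term by $\|f(|A_{ii}|)\|$, producing $\sum_{i=1}^2\|f(|A_{ii}|)\|$.

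The crux is the second summand, and this is where $s$ appears. I would feed the positive semidefinite matrix of Lemma~\ref{3.1}(3), namely $\begin{bmatrix}\tan(\alpha)\Re A & \Im A\\ \Im A & \tan(\alpha)\Re A\end{bmatrix}\ge 0$, into Lemma~\ref{t2.1}; since $\Im A$ is Hermitian its off-diagonal modulus is exactly $|\Im A|$, giving a unitary $U$ with
\[
|\Im A|\le\frac{s\tan\alpha}{2}U^*(\Re A)U+\frac{\tan\alpha}{2s}\Re A\qquad(s>0).
\]
Repeating the monotonicity, Lemma~\ref{3.3}, and unitary-invariance steps splits $\|f(|\Im A|)\|$ into $\|f(\tfrac{s\tan\alpha}{2}\Re A)\|+\|f(\tfrac{\tan\alpha}{2s}\Re A)\|$, after which a second application of Theorem~\ref{lt} and Lemma~\ref{3-3}/Lemma~\ref{le2.6} reduces each $\|f(c\,\Re A)\|$ to $\sum_{i=1}^2\|f(c|A_{ii}|)\|$ for $c=\tfrac{s\tan\alpha}{2}$ and $c=\tfrac{\tan\alpha}{2s}$. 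Adding the three contributions gives the stated inequality. The main obstacle is exactly this fourth step: the only route from the off-diagonal block $\Im A$ back to a bound purely in $\Re A$ (and hence, after Fan--Hoffman, in the diagonal blocks $A_{ii}$) is the two-term, $s$-parametrized estimate of Lemma~\ref{t2.1} applied to characterization (3), and one must keep the same free $s$ in both resulting terms so that the final bound holds for every $s>0$.
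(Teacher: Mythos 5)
Your proposal is correct and follows essentially the same route as the paper: both use the Cartesian decomposition, Lemma~\ref{3.1}(3) fed into Lemma~\ref{t2.1} to get the $s$-parametrized bound on $|\Im A|$, Lemmas~\ref{3.2} and~\ref{3.3} with monotonicity to reach $\|f(|A|)\|\le\|f(\Re A)\|+\|f(\tfrac{s\tan\alpha}{2}\Re A)\|+\|f(\tfrac{\tan\alpha}{2s}\Re A)\|$, and then Theorem~\ref{lt} plus Fan--Hoffman (Corollary~\ref{3.4}) to pass to the blocks $|A_{ii}|$. The only difference is cosmetic: you split off $\|f(|\Im A|)\|$ at the norm level before bounding it, whereas the paper substitutes the matrix inequality for $|\Im A|$ into the bound for $|A|$ first and applies Lemma~\ref{3.3} once with three terms.
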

\begin{proof}
Let the Cartesian decomposition of $A$ be $A = \Re A+i\Im A$ and $s>0$. Then from Lemma \ref{3.1}, we know that $\begin{bmatrix}
	\tan(\alpha)\Re A&\Im A\\
	\Im A&\tan(\alpha)\Re A
\end{bmatrix} \geq 0$.
So by Theorem \ref{t2.1}, we have
\begin{eqnarray}\label{e_1}
	\left|\Im A\right|\le \dfrac{s\tan\alpha}{2}\Re A+\dfrac{\tan\alpha}{2s}U^*\Re AU
\end{eqnarray}
for some unitary matrix $U\in \mathbb{M}_{n}$.
Thus,
\begin{eqnarray}\label{e_2}
|A| &=& |\Re A+i\Im A|\nonumber\\
&\leq& U_{1}\Re AU_{1}^{*}+V_{1}|\Im A|V_{1}^{*} \text{  (By Lemma \ref{3.2})}\nonumber\\
&\leq& U_{1}\Re AU_{1}^{*}+\frac{\tan\alpha}{2}V_{1}(s\Re A+s^{-1}U^*\Re AU)V_{1}^{*} \text{  (By (\ref{e_1}))}
\end{eqnarray}
for two unitary matrices $U_{1}$, $V_{1} \in \mathbb{M}_{n}$.

By Lemma \ref{3.3}, there exists three unitary matrices $U_{2}$, $V_{2}$, $V \in \mathbb{M}_{n}$ such that
\begin{eqnarray}\label{e_3}
	\begin{aligned}
		f(U_{1}\Re AU_{1}^{*}+\frac{\tan\alpha}{2}V_{1}(s\Re A+s^{-1}U^*\Re AU)V_{1}^{*}) &\leq U_{2}f(\Re A)U_{2}^{*}+V_{2}f(\frac{s\tan\alpha}{2}\Re A)V_{2}^{*}\\
		&+Vf(\frac{\tan\alpha}{2s}\Re A)V^{*}.
	\end{aligned}
\end{eqnarray}
Since $f : [0,\infty) \to [0,\infty)$ is concave, then $f$ is non-decreasing. Therefore,
\begin{eqnarray*}\label{e-0}
	\|f(|A|)\|&\leq& \|f(U_{1}\Re AU_{1}^{*}+\frac{\tan\alpha}{2}V_{1}(s\Re A+s^{-1}U^{*}\Re AU)V_{1}^{*})\| \text{  (By (\ref{e_2}) and Lemma \ref{le2.6})}\\
	&\leq& \|U_{2}f(\Re A)U_{2}^{*}+V_{2}f(\frac{s\tan\alpha}{2}\Re A)V_{2}^{*}+Vf(\frac{\tan\alpha}{2s}\Re A)V^{*}\|\text{  (By (\ref{e_3}) and Lemma \ref{le2.6}))}\\
	&\leq& \|f(\Re A)\|+ \|f(\frac{s\tan\alpha}{2}\Re A)\|+\|f(\frac{\tan\alpha}{2s}\Re A)\|\\
	&\le&\sum_{i=1}^2\left(\|f(\Re A_{ii})\|+ \|f(\frac{s\tan\alpha}{2}\Re A_{ii})\|+\|f(\frac{\tan\alpha}{2s}\Re A_{ii})\|\right)\\
	&&(\text{Since $\Re A = \begin{bmatrix}
			\Re A_{11}&\frac{A_{12}+A_{21}^{*}}{2}\\
			\frac{A_{12}^{*}+A_{21}}{2}&\Re A_{22}
		\end{bmatrix}\ge 0$, then by Theorem \ref{lt}})\\
 &\le&\sum_{i=1}^2\left(\|f(\left|  A_{ii}\right| )\|+ \|f(\frac{s\tan\alpha}{2}\left|  A_{ii}\right| )\|+\|f(\frac{\tan\alpha}{2s}\left|  A_{ii})\right| \|\right) \text{  (By Corollary \ref{3.4})}
	\end{eqnarray*}
\end{proof}
\begin{rem}
	When $s\in [1,2]$, Theorem \ref{mt} is stronger than Theorem \ref{zpt}. For instance, under same assumptions of Theorem \ref{zpt}, in Theorem \ref{mt}, for $s=1$, we have
	\[
	\Vert{f(|A|)}\Vert \leq \Vert{f(|A_{11}|)}\Vert+\Vert{f(|A_{22}|)}\Vert+2\left(\Vert{f(\frac{\tan\alpha}{2}|A_{11}|)}\Vert+\Vert{f(\frac{\tan\alpha}{2}|A_{22}|)}\Vert\right);
	\]
	for $s=2$, we have
\begin{eqnarray*}
		\Vert{f(|A|)}\Vert &\leq& \Vert{f(|A_{11}|)}\Vert+\Vert{f(|A_{11}|)}\Vert+\Vert{f(\tan\alpha|A_{11}|)}\Vert+\Vert{f(\tan\alpha|A_{22}|)}\Vert\\
		&&+\Vert{f(\frac{\tan\alpha}{4}|A_{11}|)}\Vert+\Vert{f(\frac{\tan\alpha}{4}|A_{22}|)}\Vert.
\end{eqnarray*}
\end{rem}

\begin{cor}
Let $f(t) = t^{p}$, $0 < p \leq 1$. For $s=1$ and let $A$ with $W(A) \subseteq S_{\alpha}$ for $\alpha \in [0,\pi/2)$ be partitioned as in (\ref{e1}). Then we have
\begin{equation*}
\||A|^{p}\| \leq (1+2^{1-p}(\tan\alpha)^{p})(\||A_{11}|^{p}\|+\||A_{22}|^{p}\|).
\end{equation*}
\end{cor}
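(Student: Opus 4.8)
The plan is to derive this purely as a specialization of Theorem~\ref{mt}, so the only real work is bookkeeping. First I would check that $f(t)=t^{p}$ with $0<p\le 1$ is an admissible function: it is non-negative on $[0,\infty)$, vanishes at $0$, and is concave (indeed $t^p$ is concave precisely when $0<p\le 1$), so it is non-decreasing and Theorem~\ref{mt} applies verbatim. Then I would set $s=1$ in the conclusion of that theorem. With $s=1$ the two ``middle'' terms $\|f(\tfrac{s\tan\alpha}{2}|A_{ii}|)\|$ and $\|f(\tfrac{\tan\alpha}{2s}|A_{ii}|)\|$ become identical, so they collapse into a single term carrying a factor of $2$, giving
\begin{equation*}
\||A|^{p}\| \leq \sum_{i=1}^{2}\left(\| |A_{ii}|^{p}\| + 2\,\Big\| \Big(\tfrac{\tan\alpha}{2}|A_{ii}|\Big)^{p}\Big\|\right).
\end{equation*}

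Next I would exploit the positive homogeneity of $t\mapsto t^{p}$. For any scalar $c>0$ and any positive semidefinite $X$, the eigenvalues of $cX$ are $c$ times those of $X$, hence $(cX)^{p}=c^{p}X^{p}$ and therefore $\|(cX)^{p}\|=c^{p}\|X^{p}\|$ for every unitarily invariant norm. Applying this with $c=\tfrac{\tan\alpha}{2}$ and $X=|A_{ii}|$ turns each middle term into $\big(\tfrac{\tan\alpha}{2}\big)^{p}\||A_{ii}|^{p}\| = \tfrac{(\tan\alpha)^{p}}{2^{p}}\||A_{ii}|^{p}\|$. Substituting back, the coefficient of $\||A_{ii}|^{p}\|$ becomes $1+2\cdot 2^{-p}(\tan\alpha)^{p}=1+2^{1-p}(\tan\alpha)^{p}$, and factoring this common constant out of the sum over $i=1,2$ yields exactly the claimed inequality.

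I do not anticipate a genuine obstacle here, since the argument is a direct specialization; the one point that must be stated cleanly rather than skipped is the scalar homogeneity $\|(cX)^{p}\|=c^{p}\|X^{p}\|$, as this is what merges the two attenuated terms into the single coefficient $2^{1-p}(\tan\alpha)^{p}$. Everything else is substitution and algebra.
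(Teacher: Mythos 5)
Your proposal is correct and is exactly the derivation the paper intends: the corollary is stated as an immediate specialization of Theorem~\ref{mt} with $f(t)=t^{p}$ and $s=1$, and your use of the homogeneity $\|(cX)^{p}\|=c^{p}\|X^{p}\|$ to merge the two coincident terms into the coefficient $2^{1-p}(\tan\alpha)^{p}$ is the intended (and only nontrivial) bookkeeping step. No gap; nothing further is needed.
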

Similarly, we can also get a generalization of Theorem \ref{t1.7} along a different path from Y. Mao in \cite{MJ24}.
\begin{thm}\label{m2}
	Let $f : [0,\infty) \to [0,\infty)$ be a concave function and let $A$ with $W(A) \subseteq S_{\alpha}$ for $\alpha \in [0,\pi/2)$ be partitioned as in (\ref{e1}). Then for any $s>0$, 
	\begin{equation*}
		\Vert{f(|A|)}\Vert \leq\sum_{i=1}^2 \left(\Vert{f\left(\frac{s\sec\alpha}{2}|A_{ii}|\right)}\Vert+\Vert{f\left(\frac{\sec\alpha}{2s}|A_{ii}|\right)}\Vert\right) .
	\end{equation*}
\end{thm}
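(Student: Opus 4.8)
The plan is to follow the template of Theorem \ref{mt}, but to start from the equivalent condition (2) of Lemma \ref{3.1} rather than condition (3). The payoff is that condition (2) provides the positive semidefinite block matrix $\begin{bmatrix} \sec\alpha\,\Re A & A^{*}\\ A & \sec\alpha\,\Re A\end{bmatrix}$, whose off-diagonal entry is $A$ itself. This lets me dominate $|A|$ \emph{directly} by a combination of copies of $\Re A$, bypassing the intermediate Cartesian estimate $|A|\le U_1\Re A\,U_1^{*}+V_1|\Im A|V_1^{*}$ that forces the extra $\|f(|A_{ii}|)\|$ summand in Theorem \ref{mt}; this is exactly the mechanism that trades $\tan\alpha$ for $\sec\alpha$ and cuts three terms down to two.

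First I would apply Lemma \ref{t2.1} to this block matrix with the identification $X=A^{*}$ (so $X^{*}=A$). The second inequality of that lemma then produces a unitary $V$ with
\[
|A|\le \frac{s\sec\alpha}{2}\,\Re A+\frac{\sec\alpha}{2s}\,V^{*}\Re A\,V
\]
for every $s>0$, using $\sec\alpha\,\Re A$ for both diagonal blocks. Since $\Re A\ge 0$ for a sectorial matrix, both summands on the right are positive semidefinite. Because $f$ is concave and hence nondecreasing, Lemma \ref{le2.6} lets me pass from $f(|A|)$ to $f$ of the right-hand side; Lemma \ref{3.3} then splits this into unitary conjugates of $f(\tfrac{s\sec\alpha}{2}\Re A)$ and $f(\tfrac{\sec\alpha}{2s}V^{*}\Re A\,V)$. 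Taking a unitarily invariant norm, applying the triangle inequality, and using $f(V^{*}MV)=V^{*}f(M)V$ together with unitary invariance collapses the bound to $\|f(\tfrac{s\sec\alpha}{2}\Re A)\|+\|f(\tfrac{\sec\alpha}{2s}\Re A)\|$.

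Next I would invoke Lee's Rotfel'd inequality, Theorem \ref{lt}. As $\Re A=\begin{bmatrix}\Re A_{11}&\ast\\ \ast&\Re A_{22}\end{bmatrix}\ge 0$ is partitioned with square diagonal blocks, each norm splits as $\|f(c\,\Re A)\|\le \|f(c\,\Re A_{11})\|+\|f(c\,\Re A_{22})\|$ for the two scalars $c\in\{\tfrac{s\sec\alpha}{2},\tfrac{\sec\alpha}{2s}\}$. Finally, the numerical range of a principal submatrix satisfies $W(A_{ii})\subseteq W(A)\subseteq S_\alpha$, so each $A_{ii}$ — and hence each positive multiple $c\,A_{ii}$ — is sectorial; Corollary \ref{3.4} applied to $c\,A_{ii}$ upgrades $\|f(c\,\Re A_{ii})\|$ to $\|f(c\,|A_{ii}|)\|$. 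Gathering the four terms yields precisely the asserted inequality.

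I do not expect a serious obstacle: given the lemmas, the argument is essentially the one behind Theorem \ref{mt}. The only genuinely delicate point is the opening reduction — correctly matching $A$, $X$, $B$ in Lemma \ref{t2.1} so that the modulus generated is $|A|$ (via $X^{*}=A$) rather than $|A^{*}|$, and confirming that the factor $\sec\alpha$ from condition (2) rides through the estimate unchanged. Once that identification is fixed, the remaining concavity-and-Rotfel'd steps are routine.
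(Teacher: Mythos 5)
Your proposal is correct and follows essentially the same route as the paper's own proof: both start from condition (2) of Lemma \ref{3.1}, apply Lemma \ref{t2.1} to get $|A|\le \frac{s\sec\alpha}{2}\Re A+\frac{\sec\alpha}{2s}U^{*}\Re A\,U$, then combine Lemma \ref{le2.6}, Lemma \ref{3.3}, the triangle inequality, Lee's Theorem \ref{lt} on $\Re A$, and Corollary \ref{3.4} on the sectorial blocks $A_{ii}$. Your identification $X=A^{*}$, $X^{*}=A$ in Lemma \ref{t2.1} and the remark that $W(A_{ii})\subseteq W(A)\subseteq S_{\alpha}$ make explicit two points the paper leaves implicit, but the argument is the same.
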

\begin{proof} By Lemma \ref{3.1}, we know
	\[
	\begin{bmatrix}
		sec(\alpha)\Re A&A^{*}\\
		A&sec(\alpha)\Re A
	\end{bmatrix}\ge 0.
	\] By using Lemma \ref{t2.1}, it implies that
\begin{equation}\label{e4.1}
|A| \leq \frac{\sec\alpha}{2}(s\Re A+s^{-1}U^{*}\Re AU), \text{ for any $s>0$.}
\end{equation}
So by Lemma \ref{3.3}, there exists two unitary matrices $U_{1}$, $V_{1} \in \mathbb{M}_{n}$ such that
\begin{equation}\label{e4.2}
f(\frac{\sec\alpha}{2}(s\Re A+s^{-1}U^{*}\Re AU)) \leq U_{1}f(\frac{s\sec\alpha}{2}\Re A)U_{1}^{*}+V_{1}f(\frac{\sec\alpha}{2s}\Re A)V_{1}^{*}.
\end{equation}
Since $f : [0,\infty) \to [0,\infty)$ is concave, then $f$ is non-decreasing. Therefore,
\begin{eqnarray*}
\|f(|A|)\|&\leq& \|f(\frac{\sec\alpha}{2}(s\Re A+s^{-1}U^{*}\Re AU))\| \text{  (By (\ref{e4.1}) )  and Lemma \ref{le2.6})}\\
&\leq& \|U_{1}f(\frac{s\sec\alpha}{2}\Re A)U_{1}^{*}+V_{1}f(\frac{\sec\alpha}{2s}\Re A)V_{1}^{*}\| \text{  (By (\ref{e4.2}) ) and Lemma \ref{le2.6})}\\
&\leq& \|f(\frac{s\sec\alpha}{2}\Re A)\|+ \|f(\frac{\sec\alpha}{2s}\Re A)\|\\
&\le&\sum_{i=1}^2\left(\|f(\frac{s\sec\alpha}{2}\Re A_{ii})\|+ \|f(\frac{\sec\alpha}{2s}\Re A_{ii})\|\right)\\
&&(\text{Since $\Re A = \begin{bmatrix}
		\Re A_{11}&\frac{A_{12}+A_{21}^{*}}{2}\\
		\frac{A_{12}^{*}+A_{21}}{2}&\Re A_{22}
	\end{bmatrix}\ge 0$, then by Theorem \ref{lt}})\\
&\le&\sum_{i=1}^2\left(\|f(\frac{s\sec\alpha}{2}\left|  A_{ii}\right| )\|+ \|f(\frac{\sec\alpha}{2s}\left|  A_{ii}\right| )\|\right)\text{  (By Corollary \ref{3.4})}
\end{eqnarray*}
\end{proof}
\begin{rem}
	For $s=1$ in Theorem \ref{m2}, we obtain the result in \ref{ss}.
\end{rem}

 \section*{Acknowledgement} 
The authors thank professor M. Lin for introducing this topic and discussing with us heartily.


\begin{thebibliography}{11}\bibitem{AB07}J.S. Aujla, J.-C. Bourin, Eigenvalues inequalities for convex and log-convex functions, Linear Algebra Appl. 424 (2007) 25-35.
\bibitem{AS20}M. Alakhrass, M. Sababheh, Lieb functions and sectorial matrices,  Linear Algebra Appl. 586 (2020) 308-324.
\bibitem{AP03}Yu.M. Arlinski, A.B. Popov, On sectorial matrices, Linear Algebra Appl. 370 (2003) 133-146.
\bibitem{BR97}R. Bhatia, Matrix Analysis, Springer, 1997.
\bibitem{DL14}S. Drury, M. Lin, Singular value inequalities for matrices with numerical ranges in a sector, Oper.
Matrices 8 (2014) 1143-1148.
\bibitem{FL16}X. Fu, Y. Liu, Rotfel'd inequality for partitioned matrices with numerical ranges in a sector, Linear Multilinear Algebra 64 (2016) 105-109.
\bibitem{HZ17}L. Hou, D. Zhang, Concave functions of partitioned matrices with numerical ranges in a sector, Math Inequal. Appl. 20 (2017) 583-589.
\bibitem{LE11}E.-Y. Lee, Extension of Rotfel'd theorem, Linear Algebra Appl. 435 (2011) 735-741.
\bibitem{MJ24}Y. Mao, G. Ji, On Rotfel'd type inequalities for sectorial matrices, Linear Algebra Appl. 691 (2024) 123-132.
\bibitem{RS69}S. Ju. Rotfel'd, The singular values of a sum of completely continuous operators, Top. Math. Phys., vol. 3, Consultants Bureau, 1969, pp. 73-78.
\bibitem{YL19}J. Yang, L. Lu, Z. Chen, A refinement of Rotfel'd type inequality for partitioned matrices with numerical ranges in a sector, Linear Multilinear Algebra 67 (2019) 1719-1726.
\bibitem{ZF15}F. Zhang, A matrix decomposition and its applications, Linear Multilinear Algebra 63 (2015)
2033-2042.
\bibitem{ZP15}P. Zhang, A further extension of Rotfel'd theorem, Linear Multilinear Algebra 63 (2015) 2511-2517.
\bibitem{ZN18}J. Zhao, K. Ni, Some extensions of Rotfel'd theorem, Linear Multilinear Algebra 66 (2018) 410-417.



\end{thebibliography}
\end{document}